 \definecolor{red}{rgb}{1,0,0}
\newcommand{\xddots}{%
  \raise 4pt \hbox {.}
  \mkern 6mu
  \raise 1pt \hbox {.}
  \mkern 6mu
  \raise -2pt \hbox {.}
}
\theoremstyle{break}
\newtheorem{thm}{Theorem}[section] 
\newtheorem{RQ}{Remark}[thm]
\newtheorem{prop}[thm]{Proposition}
\newtheorem{lem}[thm]{Lemma}
\numberwithin{equation}{section}
\DeclareMathOperator \ch {ch}
\DeclareMathOperator \sh {sh}
\DeclareMathOperator \tnh {th}
\title{\large\bf{Quaternionic structure and analysis \\
of some
    Kramers-Fokker-Planck
 operators}}
\author[1]{Mona~Ben~Said \thanks{bensaid@univ-paris13.fr}}
\author[2]{Francis~Nier \thanks{nier@math.univ-paris13.fr}}
\author[3]{Joe~Viola \thanks{ Joseph.Viola@univ-nantes.fr}}
\affil[1,2]{Laboratoire Analyse, G{\'e}om{\'e}trie et Applications\\
   Universit{\'e} Paris 13\\
   99 Avenue Jean Baptiste Cl{\'e}ment \\93430 Villetaneuse, France
 }
\affil[3]{
   Laboratoire de Math{\'e}matiques Jean Leray\\
   Universit{\'e} de Nantes\\
   2, rue de la Houssini{\`e}re\\
   BP 92208, 44322 Nantes Cedex 3, France
 }
\begin{document}

\maketitle
\vspace{-1cm}
\begin{abstract}
The present article is concerned with global subelliptic
estimates for Kramers-Fokker-Planck operators
with polynomials of degree less than or equal to two. 
The constants appearing in those
  estimates are accurately formulated
 in terms of the coefficients, especially when those are
  large.
\end{abstract}

\noindent\textbf{Key words:} subelliptic estimates, compact resolvent, Kramers-Fokker-Planck operator, quaternions, Bargmann transform.\\
\noindent\textbf{MSC-2010:} 35Q84, 35H20, 35P05, 47A10, 14P10, 20G20 

\tableofcontents

\section{Introduction and main results}

In this work, we consider the {Kramers}-Fokker-Planck operator given by

\begin{align}
K_V=p.\partial_q-\partial_qV(q).\partial_p+\frac{1}{2}(-\Delta_p+p^2),\quad (q,p)\in
  \mathbb{R}^{2d}
\,,
\label{def1}
\end{align}
where $q$ denotes the space variable, $p$ denotes the velocity
variable, $x.y=\sum\limits_{j=1}^{d}x_{j}y_{j}$\,,
  $x^{2}=\sum\limits_{j=1}^{d}x_{j}^{2}$ and  the
potential $V
(q)=\sum\limits_{|\alpha|\leq 2}V_{\alpha}q^{\alpha}$ is a real-valued polynomial
function on $\mathbb{R}^{d}$ with $d^{\circ}V=2$\,.
After making an orthogonal change of variables one may assume that its Hessian matrix is
 $$\mathrm{Hess}\; V=\begin{pmatrix}
\nu_{1} &  0  & \ldots & 0\\
0  &  \nu_{2} & \ldots & 0\\
\vdots & \vdots & \ddots & \vdots\\
0  &   0       &\ldots & \nu_{d}
\end{pmatrix}{~.}
$$
The constant term $V_{0}$ does not appear in $K_{V}$ and can be set to $0$ and we distinguish two cases:

$\bullet$ If $\mathrm{Hess}\; V$ is non-degenerate,  a   translation in $q$
reduces the problem to
\begin{align}
V(q)=\sum_{i=1}^d\frac{\nu_i}{2}q^2_i{\,.}
\label{eq1}
\end{align}

$\bullet$ If $\mathrm{Hess}\; V$ is  degenerate, a good choice of  orthonormal basis gives:

\begin{align}
V(q)=\lambda_1q_1+\sum_{i=2}^d\frac{\nu_i}{2}q^2_i\,,
\label{eq2}
\end{align}
where $\lambda_1$ is invariantly defined as $\min\limits_{q\in\mathbb{R}^{d}}\;|\nabla\;V(q)|\geq
  0$\,.\\

As established in \cite{HeNi} (see Proposition 5.5, page 44), the non-selfadjoint operator $K_V$ is 
maximal accretive when endowed with the
domain 
$D(K_V)=\left\{u\in L^2(\mathbb{R}^{2d}),\;\;\; K_Vu\in
  L^2(\mathbb{R}^{2d})\right\}.$
The question about the compactness of the resolvent
combined with subelliptic estimates is intimately related with the
return to the equilibrium or exponential decay estimates. As pointed
out in \cite{HerNi} and \cite{HeNi}, the analysis of $K_{V}$ is also
strongly related to the one of the Witten Laplacian
$\Delta_{V}^{(0)}=-\Delta_{q}+\left|\nabla V(q)\right|^{2}-\Delta V(q)$ for which
maximal hypoelliptic techniques developed by Helffer and Nourrigat in
\cite{HeNo} provide accurate criteria for general polynomial
potentials $V(q)$\,. 

Within this maximal hypoelliptic analysis of
$\Delta_{V}^{(0)}$ there is a recurrent interplay between qualitative
estimates and quantitative estimates in terms of the size of the
coefficients  of the polynomial $V(q)$\,. The general idea is that the
study of the operator $\Delta_{V}^{(0)}$ as $q\to \infty$ when $V$ is
a degree $r$ polynomial, is reduced to a quantitative version of
subelliptic estimates for $\Delta_{\tau \widetilde{V}}^{(0)}$\,, where
$\widetilde{V}$ belongs to some family of polynomials related to $V$ with
degree less than $r$\,, and $\tau$ is a large
parameter. 

``Quantitative estimates'' means that we consider
subelliptic estimates with a good and optimal control of the constant with respect
to the parameter $\tau$\,. Remember also that the compactness of the
resolvent  on
$\Delta_{V}^{(0)}$ obtained by maximal hypoelliptic techniques\,,
relies on the fact that no polynomial $\widetilde{V}$ of the family
associated with $V$ admits a local minimum. It shows in particular
that the compactness of the resolvent of $\Delta_{+V}^{(0)}$ and
$\Delta_{-V}^{(0)}$ differ and the first non trivial example comes
with the potential $\pm V(q_{1},q_{2} )=\pm q_{1}^{2}q_{2}^{2}$ in
$\mathbb{R}^{2}$\,. For the Kramers-Fokker-Planck operator $K_{V}$\,, no
sufficient condition until the recent work by W.X.~Li \cite{Li2}
exhibited such a different behavior. 

We hope to develop the same strategy for the non self-adjoint
operator $K_{V}$ as for the Witten Laplacian $\Delta_{V}^{(0)}$\,,
namely try to get the optimal subelliptic estimates for some class of
polynomial functions $V(q)$\,, by making use of quantitative estimates
for some lower degree polynomials. The case $d^{\circ}V\leq 2$ for
which the Weyl symbol of $K_{V}$ is a polynomial of degree $\leq 2$ in
the variable $(q,p,\xi_{q},\xi_{p})$ allows a lot of exact analytic
calcultations and was already deeply studied in
\cite{Hor}\cite{Sjo}\cite{HiPr}\cite{Vio}\cite{Vio2}\cite{AlVi}. Nevertheless
exploiting those exact analytic expressions for the semigroup kernel or
symbol (Mehler's type formulas) or for the spectrum does not solve
completely the question of optimal quantitative subelliptic estimates
for the non self-adjoint operator $K_{V}$\,. The
semiclassical regime which can be handled quite accurately via
symbolic calculus gives results after rescaling
essentially when the transport part
$p.\partial_{q}-\partial_{q}V(q).\partial_{p}$ is small compared to
the diffusive--friction part $\frac{-\Delta_{p}+p ^{2}}{2}$\,. 

Actually, we are mainly interested in the other regime where the
Hamiltonian dynamics is stronger than the diffusive and friction
part. The difficulty then appears clearly, because understanding the
operator $K_{V}$ requires the understanding of the Hamiltonian
dynamics associated with
$p.\partial_{q}-\partial_{q}V(q).\partial_{p}$ which, for a general
polynomial $V$ exhibits a rich variety of phenomena, and which, for a
polynomial of degree $\leq 2$, already contains the three types of dynamics:
a) elliptic (bounded trajectories when $V$ is a positive definite
quadratic form); b)
hyperbolic (trajectories escaping exponentially quickly in time  to
infinity when $V$ is a negative definite quadratic form); and c)
parabolic (trajectories escaping polynomially quickly in time to
infinity when $V$ is linear).

At a more fundamental level, understanding the operator $K_{V}$ when the transport
term is dominant also proceeds in the same direction as Bismut's program: in \cite{Bis1}, Bismut introduced his
hypoelliptic Laplacian in order to interpolate Morse
theory (in the
high diffusion-friction regime via the Witten Laplacian) and the
topology of loop spaces (dominant transport term). The difficult part
with a dominant transport term
was understood only for the geodesic flow on symmetric spaces making
use of the specific algebraic structure in \cite{Bis2}.

With this respect our simpler case also requires a better
understanding of the underlying algebra, and it appeared that after
using the general FBI-techniques  the Kramers-Fokker-Planck
evolution with quadratic potentials, even in dimension $d=1$\,, 
 is reduced to some linear
dynamics on $\mathbb{C}^{4}$ which are easily computed after
elucidating some quaternionic
structure. In this specific case, this also completes the unfruitful
attempts in \cite{HeNi}, Section~9.1, to exhibit some useful nilpotent Lie algebra
structure for Kramers-Fokker-Planck operators. Actually, quaternions
and Pauli matrices are related to the $\mathfrak{su}(2)$ Lie algebra, so
the Lie algebra structure decomposition useful to the analysis 
of Kramers-Fokker-Planck operators with polynomial potentials is
certainly not nilpotent.

Denoting 
\[
	O_p=\frac{1}{2}(D^2_p+p^2)
\]
and
\[
X_V=p.\partial_q-\partial_qV(q).\partial_p~,
\] 
we can rewrite the
Kramers-Fokker-Planck operator $K_V$ defined in (\ref{eq1}) as
$K_V=X_V+O_p$. \\
In this work, we are mainly based on recent publications by Hitrik,
Pravda-Starov, Viola, and Aleman \cite{AlVi},
\cite{Vio2}, and \cite{HPV2} which deal with operators having
polynomial symbols of degree less than or equal to two. 

\textbf{Notations:}  
\[
	\begin{aligned}
	\text{Tr}_{+} & = \sum_{\nu_{i>0}} \nu_{i}\,,
	\\ \text{Tr}_{-} &=-\sum_{\nu_{i}\leq 0}\nu_{i}\,,
\\ A &= \max \{(1+\text{Tr}_{+})^{2/3}, 1+\text{Tr}_{-}\}
\,	
\\	B &= \max\{\lambda_1^{4/3}, \frac{1+\text{Tr}_{-}}{\log(2+\text{Tr}_{-})^2}\},
	\end{aligned}
\]

The main goal of this work is the following subelliptic estimates.

\begin{thm}\label{thm1.1}

Let $V(q)$ be a potential as in (\ref{eq1}) or (\ref{eq2}). Then there
exists a constant $c>0$ that does not depend on $V$ such that 
the subelliptic estimate with a remainder term
\begin{align}
\|K_Vu\|^2_{L^2(\mathbb{R}^{2d})}+A\|u\|^2_{L^2(\mathbb{R}^{2d})}\ge {c} \Big(\|O_pu\|^2_{L^2(\mathbb{R}^{2d})}&+\|X_Vu\|^2_{L^2(\mathbb{R}^{2d})}\nonumber\\&+\|\langle\partial_q V(q)\rangle^{2/3}u\|^2_{L^2(\mathbb{R}^{2d})}+\|\langle D_q\rangle^{2/3}u\|_{L^2(\mathbb{R}^{2d})}\Big)\label{eq4}
\end{align}
holds for all $u\in D(K_V)$.
\end{thm}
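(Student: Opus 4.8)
The plan is to diagonalize $\mathrm{Hess}\,V$ and reduce $(\ref{eq4})$ to three model situations — a confining quadratic potential ($\mathrm{Hess}\,V>0$), an anti-confining one ($\mathrm{Hess}\,V<0$) and the linear potential $\lambda_1q_1$ — each of which can be attacked through the exact ``quadratic'' calculus sketched above; by density it suffices to argue for $u$ in a convenient core. After the orthogonal change of variables one has $K_V=\sum_iK^{(i)}$ with $K^{(i)}=X^{(i)}+O^{(i)}$ acting only on $(q_i,p_i)$, each $X^{(i)}$ skew-adjoint and each $O^{(i)}$ self-adjoint with $O^{(i)}\ge\frac12$; the Weyl symbol of $K^{(i)}$ is the complex (affine) quadratic form $\frac12(p_i^2+\xi_{p_i}^2)+i(p_i\xi_{q_i}-\partial_{q_i}V\cdot\xi_{p_i})$ on $\mathbb{R}^4$. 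Conjugating by the FBI/Bargmann transform in $p_i$ turns $O^{(i)}$ into (a constant plus) the Euler operator and conjugates $e^{-tK^{(i)}}$ into an integral operator whose Gaussian phase solves a \emph{linear} system of ODEs on $\mathbb{C}^4$; its $4\times4$ generator is the Hamilton map of the symbol, whose quaternionic symmetry puts the spectrum in an explicitly computable quadruple $\{\mu_i,\overline{\mu_i},-\mu_i,-\overline{\mu_i}\}$ depending on $\nu_i$ (resp. $\lambda_1$).

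From this representation I would read off, in each block, a short-time bound $\|e^{-tK^{(i)}}\|\lesssim e^{-c\,\rho_i\,t^{3}}$ with $\rho_i$ quadratic in $(q_i,\xi_{q_i})$: this reflects that one KFP bracket $[O^{(i)},X^{(i)}]$ already reaches the $q_i$--variables, which is precisely why integrating $\int_0^{\infty}e^{-c\rho_i t^{3}}\,dt\sim\rho_i^{-1/3}$ and translating back produces the fractional weights $\langle\partial_{q_i}V\rangle^{2/3}$ and $\langle D_{q_i}\rangle^{2/3}$ on the right-hand side of $(\ref{eq4})$ (here $\rho_i^{1/3}\sim\langle\nu_iq_i\rangle^{2/3}+\langle\xi_{q_i}\rangle^{2/3}$ on the set where the $O_p$--part is of unit size). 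Keeping track of the $\nu_i$--dependence of all constants then yields the remainder $\|u\|^2$ with: coefficient $(1+\text{Tr}_{+})^{2/3}$ in the confining case — the exponent $2/3$ and the sum $\text{Tr}_{+}=\sum_{\nu_i>0}\nu_i$ being forced, and seen to be sharp, by testing on the Maxwellian $e^{-p^2/2-V(q)}$, on which $K_V$ is of size $O(1)$ whereas the right-hand side of $(\ref{eq4})$ is of size $(1+\text{Tr}_{+})^{2/3}$; coefficient $1+\text{Tr}_{-}$ in the anti-confining case, forced by the exponential escape at rate $\sqrt{|\nu_i|}$ of the hyperbolic flow, which prevents any soft commutator bound from controlling $|\nu_i|\,\|q_iu\|^2$ and costs one full power of $|\nu_i|$; and a coefficient independent of $\lambda_1$ in the linear direction, because there $\|e^{-tK^{(1)}}\|\lesssim e^{-c\lambda_1^2t^3}$, hence $\|(K^{(1)})^{-1}\|\lesssim\lambda_1^{-2/3}$, so this block controls its own weight $\langle\partial_{q_1}V\rangle^{2/3}=\langle\lambda_1\rangle^{2/3}$. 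Assembling the three cases gives the claimed $A=\max\{(1+\text{Tr}_{+})^{2/3},\,1+\text{Tr}_{-}\}$.

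To combine the sign-classes one cannot merely add one-dimensional estimates — that would wreck the subadditivity behind $(1+\text{Tr}_{+})^{2/3}$ — so I would run the Bargmann/ODE analysis simultaneously over all blocks of a given sign, treating the confining, anti-confining and linear parts as three multidimensional but structurally rigid model operators, and handle their mutual interaction by accretivity, using $\|K_Vu\|^2=\|O_pu\|^2+\|X_Vu\|^2+\langle[O_p,X_V]u,u\rangle$ and the fact that $O_p$ and the $O^{(i)}$ enter every cross term with a favorable sign. \textbf{The main obstacle} is entirely quantitative: the explicit semigroup/resolvent formulas must be made effective \emph{uniformly} as the $|\nu_i|$ and $\lambda_1$ tend to infinity, and this is delicate exactly where the classical dynamics is hyperbolic, since there the Gaussian kernels coming from the $\mathbb{C}^4$--flow spread at an exponentially growing rate and a naive energy estimate loses an $e^{c\sqrt{|\nu_i|}t}$ factor; one must use the explicit eigenvalues of the Hamilton map — the point where the quaternionic structure is indispensable — to exhibit the cancellations reducing this loss to the stated linear dependence on $\text{Tr}_{-}$, and then propagate these uniform kernel bounds through to the operator-norm statement $(\ref{eq4})$.
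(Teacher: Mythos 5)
Your overall architecture (separation of variables after diagonalizing $\mathrm{Hess}\,V$, three one-dimensional models, Bargmann/FBI reduction of each block to explicit linear dynamics on $\mathbb{C}^{4}$ governed by the Hamilton map, uniformity in $\nu_i,\lambda_1$ as the real difficulty) matches the paper's strategy for the ingredients. But the proposal has a genuine gap at the step that actually yields the stated inequality. Your mechanism for producing the weights $\langle\partial_qV\rangle^{2/3}$, $\langle D_q\rangle^{2/3}$ is a bound of the form $\|e^{-tK^{(i)}}\|\lesssim e^{-c\rho_i t^{3}}$ with $\rho_i$ ``quadratic in $(q_i,\xi_{q_i})$'', followed by $\int_0^\infty e^{-c\rho t^3}dt\sim\rho^{-1/3}$. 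As written this conflates symbol-level quantities with operator norms (the norm of $e^{-tK^{(i)}}$ cannot depend on the phase-space point), and even charitably reinterpreted it is a resolvent-type heuristic: it does not produce an $L^2$-inequality of the form \eqref{eq4} with the remainder $A\|u\|^2_{L^2}$. What the paper actually proves is the weighted semigroup estimate $\sum_i\||D_{q_i}|e^{-t(K_V+\sqrt A)}\|+\||\partial_{q_i}V|e^{-t(K_V+\sqrt A)}\|\le ct^{-3/2}$ (Proposition~\ref{prop1.1}, via Proposition~\ref{prop3.1}, Lemma~\ref{lem3.3} and Section~\ref{sec:deg}), and then converts the $t^{-3/2}$ decay into the fractional-power domain inclusion \eqref{eq5}, i.e.\ $\sum_i\||D_{q_i}|^{2/3}u\|^2+\||\partial_{q_i}V|^{2/3}u\|^2\le c\|(\sqrt A+K_V)u\|^2$, by Lunardi's real-interpolation theorems; this interpolation bridge, which is where the exponent $2/3$ really comes from, is absent from your argument. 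Likewise, recovering $\|O_pu\|^2$ and $\|X_Vu\|^2$ requires the identity $\|(\sqrt A+K_V)u\|^2=\|(\sqrt A+O_p)u\|^2+\|X_Vu\|^2+2\mathbb{R}\mathrm{e}\langle[O_p,X_V]u,u\rangle$ and an absorption of the commutator term — which has no sign — via Young's inequality and the already-established \eqref{eq5}; your appeal to ``accretivity'' and ``favorable signs'' does not address this term, yet it is exactly where the $2/3$-weights are needed.

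A secondary point: your concern that one ``cannot merely add one-dimensional estimates'' without wrecking the $(1+\mathrm{Tr}_{+})^{2/3}$ behaviour is misplaced in this framework. The paper does precisely the tensorized sum, $\sum_i\|M_ie^{-t(K_V+\sqrt A)}\|\le\sum_i\|M_ie^{-t(K_{V(q_i)}+\alpha_i)}\|$ with $\alpha_i=\nu_i^{1/3}$ or $|\nu_i|^{1/2}$, and this is legitimate because the single global shift $\sqrt A$ dominates every one-dimensional shift $\alpha_i$; no multidimensional simultaneous Bargmann analysis across blocks of a given sign is needed. The quantitative uniformity you correctly identify as the main obstacle is then carried entirely by the three one-dimensional lemmas (with the shifts $\nu^{1/3}$, $|\nu|^{1/2}$, and $1$ respectively), not by any cross-block cancellation.
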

\begin{thm}\label{thm1.2}

Let $V(q)$ as in (\ref{eq1}) or (\ref{eq2}). Then there is  a constant
$c>0$ independent of the polynomial $V$ so that 
{the subelliptic estimate without a remainder}
\begin{align*}
\|K_Vu\|^2_{L^2(\mathbb{R}^{2d})}\ge \frac{c}{1+\frac{A}{B}}\Big(\|O_pu\|^2_{L^2(\mathbb{R}^{2d})}&+\|X_Vu\|^2_{L^2(\mathbb{R}^{2d})}\nonumber\\&+\|\langle\partial_q V(q)\rangle^{2/3}u\|^2_{L^2(\mathbb{R}^{2d})}+\|\langle D_q\rangle^{2/3}u\|_{L^2(\mathbb{R}^{2d})}\Big)
\end{align*}
{holds}
for all $u\in D(K_V)$, under the condition $\text{Tr}_{-}+\lambda_1\not=0$.
\end{thm}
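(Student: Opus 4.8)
\noindent\emph{Idea of the proof.} The plan is to obtain Theorem~\ref{thm1.2} from Theorem~\ref{thm1.1} by absorbing the remainder $A\|u\|^2$ into the left-hand side. The only thing really needed for this is the coercivity estimate: there is a constant $\gamma\in(0,1]$, independent of $V$, such that
\begin{equation}
\|K_Vu\|_{L^2}^2\ \ge\ \gamma\,B\,\|u\|_{L^2}^2,\qquad u\in D(K_V),
\label{eq:coerc}
\end{equation}
under the hypothesis $\text{Tr}_-+\lambda_1\neq0$. Indeed, multiplying \eqref{eq:coerc} by $A/(\gamma B)$, adding the estimate of Theorem~\ref{thm1.1}, and using $1+A/(\gamma B)\le(1+\gamma^{-1})(1+A/B)$, one gets
\[
\|K_Vu\|^2\ \ge\ \frac{c}{(1+\gamma^{-1})(1+A/B)}\Big(\|O_pu\|^2+\|X_Vu\|^2+\|\langle\partial_qV(q)\rangle^{2/3}u\|^2+\|\langle D_q\rangle^{2/3}u\|^2\Big),
\]
which is Theorem~\ref{thm1.2} with $c$ replaced by $c/(1+\gamma^{-1})$. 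So the whole matter reduces to \eqref{eq:coerc}.

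Since $\text{Tr}_-\ge0$ one always has $B\gtrsim1$, and as long as $B$ remains bounded \eqref{eq:coerc} already follows from the maximal accretivity of $K_V$: $\operatorname{Re}\langle K_Vu,u\rangle=\langle O_pu,u\rangle\ge\tfrac d2\|u\|^2$, hence $\|K_Vu\|\ge\tfrac d2\|u\|$. There remain the two regimes where $B\to\infty$, namely $\lambda_1\to\infty$ in case~\eqref{eq2} and $\text{Tr}_-\to\infty$, in which one must gain the full factor $\sqrt B$. In both cases $K_V$ splits as a sum of pairwise commuting, maximally accretive one-dimensional Kramers--Fokker--Planck operators acting on the pairs of variables $(q_i,p_i)$: a linear block $K^{(1)}_{\mathrm{lin}}=p_1\partial_{q_1}-\lambda_1\partial_{p_1}+O_{p_1}$ (in case~\eqref{eq2}) and quadratic blocks $K^{(i)}=p_i\partial_{q_i}-\nu_iq_i\partial_{p_i}+O_{p_i}$. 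Commutativity of the corresponding semigroups, contractivity of $e^{-tK^{(j)}}$ for $j\ne i_0$, and $K_V^{-1}=\int_0^{+\infty}\prod_ie^{-tK^{(i)}}\,dt$ give $\|K_V^{-1}\|\le\int_0^{+\infty}\|e^{-tK^{(i_0)}}\|\,dt$ for each index $i_0$, so that the problem is reduced to one-dimensional model estimates of the form $\int_0^{+\infty}\|e^{-tK^{(i_0)}}\|\,dt\lesssim\kappa^{-1}$ with $\kappa\asymp\sqrt B$.

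For the linear block one expects $\int_0^{+\infty}\|e^{-tK^{(1)}_{\mathrm{lin}}}\|\,dt\lesssim\lambda_1^{-2/3}$; the matching lower bound $\|K^{(1)}_{\mathrm{lin}}v\|\gtrsim\lambda_1^{2/3}\|v\|$ for $\lambda_1\ge1$ is in fact a direct consequence of Theorem~\ref{thm1.1} applied in dimension one to $V=\lambda_1q_1$ (for which $A=1$ and $|\partial_{q_1}V|\equiv\lambda_1$), and the upgrade to the semigroup integral uses the explicit control of quadratic Kramers--Fokker--Planck semigroups of \cite{Vio2,HPV2,AlVi}; this settles the regime $\lambda_1\to\infty$, where $B\asymp\lambda_1^{4/3}$. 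The delicate point is the concave block: choosing $i_0$ with $\mu^2:=-\nu_{i_0}=\max_{\nu_i\le0}(-\nu_i)$ (so $\mu^2\le\text{Tr}_-\le d\mu^2$), one must establish
\[
\int_0^{+\infty}\big\|e^{-t(p\partial_q+\mu^2q\partial_p+O_p)}\big\|\,dt\ \lesssim\ \frac{\log(2+\mu^2)}{\mu},
\]
i.e. a gap of order $\mu/\log\mu$ for the hyperbolic quadratic model, the logarithmic loss being genuine and caused by the exponentially unstable Hamilton flow $\dot q=p,\ \dot p=\mu^2q$. This is exactly where the explicit analysis of the quadratic dynamics has to be used — the reduction of the semigroup to a linear flow on $\mathbb C^4$ via the quaternionic structure, together with the spectral and resolvent computations of \cite{AlVi,Vio2,HPV2,Li2}. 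Once it is available, $\|K_Vu\|\gtrsim(\mu/\log\mu)\|u\|$, which is \eqref{eq:coerc} in the regime $\text{Tr}_-\to\infty$ since there $B\asymp_d\mu^2/\log(2+\mu^2)^2$.

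The main obstacle is precisely this concave model estimate: soft tools (numerical range, energy/hypocoercivity identities) only reproduce the dimensional constant $\gamma$ of the bounded-$B$ case and detect no growth in $\mu$, so the algebraic structure of the operator has to be exploited in an essential way. It is also here that the hypothesis $\text{Tr}_-+\lambda_1\neq0$ plays its role: it guarantees that $K_V$ contains a hyperbolic block ($\nu_i<0$) or a parabolic one ($\lambda_1>0$) producing the gain $\sqrt B$, whereas without it $B$ collapses to a constant and one is left either with the purely elliptic situation — whose sharp constant has a different shape — or with a flat direction of $V$, as for $V\equiv0$, for which no remainder-free estimate can hold at all.
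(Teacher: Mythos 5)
Your proposal follows essentially the same route as the paper: Theorem~\ref{thm1.2} is deduced from the estimate with remainder (Theorem~\ref{thm1.1}) by absorbing $A\|u\|^2$ through a coercivity bound $\|K_Vu\|^2\gtrsim B\|u\|^2$, which is exactly the second inequality of Proposition~\ref{prop1.1} (equivalently $\|K_V^{-1}\|\lesssim B^{-1/2}$), and your absorption step with the bookkeeping $1+A/(\gamma B)\le(1+\gamma^{-1})(1+A/B)$ is correct. The one-dimensional model bounds you defer to the explicit quadratic analysis (separation of variables, the linear block of size $\lambda_1^{2/3}$, and the hyperbolic block with the logarithmic loss $\log\nu/\sqrt{\nu}$) are precisely the content of Proposition~\ref{prop1.1} and Sections 4--5 of the paper, which precede Theorem~\ref{thm1.2}, so invoking them is legitimate and your argument matches the paper's.
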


\begin{RQ}
In view of the comparison with compactness criteria for Witten
Laplacians with polynomial potentials (see  Theorem 10.16 in \cite{HeNi}), note that the 
 condition $\text{Tr}_{-}+\lambda_1\not=0$ imposed in Theorem~\ref{thm1.2} is equivalent to the fact that the potential $V$ does not have a local minimum.  
\end{RQ}

The two previous Theorems are both consequences of the following result.
\begin{prop}\label{prop1.1}

There exists a constant $c>0$ such that
\begin{align*}
 \sum_{i=1}^d
  \| |D_{q_i}| e^{-t(K_V+\sqrt{A})}\|_{{\cal L}(L^{2}(\mathbb{R}^{2d}))}+
 \||\partial_{q_i}V(q_i)|e^{-t(K_V+\sqrt{A})}\|_{\mathcal{L}(L^2(\mathbb{R}^{2d}))}\le\frac{c}{t^{\frac{3}{2}}}
\end{align*}
 for all $t>0$.

Moreover, if $\text{Tr}_{-}+\lambda_1\not=0$,

\begin{align*}
{\|K_{V}^{-1}\|_{{\cal L}(L^{2}(\mathbb{R}^{2d}))}\leq}\int_0^{+\infty}\|e^{-tK_V}\|_{\mathcal{L}(L^2(\mathbb{R}^{2d}))}dt\le
  \frac{c}{\sqrt{B}}
\end{align*}
\end{prop}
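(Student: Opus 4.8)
The plan is to reduce everything to an explicit analysis of the semigroup $e^{-tK_V}$ generated by a quadratic (degree $\le 2$) operator. Since the Weyl symbol of $K_V$ is a polynomial of degree $\le 2$ in all phase space variables $(q,p,\xi_q,\xi_p)$, the operators $e^{-tK_V}$ are metaplectic-type (complex Gaussian) operators, and the papers of Aleman--Viola, Viola, and Hitrik--Pravda-Starov--Viola cited in the introduction provide Mehler-type formulas together with sharp norm bounds for such operators in terms of the associated Hamilton flow. After conjugating by the Bargmann (FBI) transform, the evolution is governed by a linear flow on $\mathbb{C}^{4d}$ (in fact block-diagonalizes into $d$ copies of a $\mathbb{C}^4$ system) whose structure is controlled by the quaternionic computation announced in the introduction; the eigenvalues and Jordan structure of this linear flow depend only on the $\nu_i$ and on $\lambda_1$. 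The first step is therefore to write down this $\mathbb{C}^4$ linear dynamics explicitly in each of the three dynamical regimes (elliptic $\nu_i>0$, hyperbolic $\nu_i<0$, parabolic/linear case), and to extract from it the operator norm of $e^{-tK_V}$ and of the ``weighted'' operators $|D_{q_i}| e^{-t(K_V+\sqrt A)}$ and $|\partial_{q_i} V(q_i)| e^{-t(K_V+\sqrt A)}$.

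Concretely, I would proceed as follows. First, by tensorization reduce to $d=1$, treating separately the scalar parameter $\nu=\nu_i$ (and, in the degenerate case, the linear coefficient $\lambda_1$); the constants $\mathrm{Tr}_+$, $\mathrm{Tr}_-$, $A$, $B$ are then recovered by summing/maximizing over the one-dimensional blocks. Second, for a fixed one-dimensional block, use the exact Mehler formula (from \cite{AlVi}, \cite{Vio2}, \cite{HPV2}) to express $\|e^{-tK_V}\|_{\mathcal L(L^2)}$ and the weighted norms in terms of the entries of the $2\times 2$ (or $4\times 4$ real) symplectic-type matrix solving the linearized flow. Third, carry out the asymptotic analysis of these matrix entries: for small $t$ one expects the weighted norms to behave like $t^{-1/2}$ per derivative (this is the short-time subelliptic/smoothing gain, one power of $t^{-1/2}$ each for $|D_q|$ and $|\partial_q V|$, but the statement asks for the combined $t^{-3/2}$ which is consistent with the $2/3$-exponent weights after interpolation — I would track the exact exponents through the Mehler formula), while the shift $\sqrt A$ in the exponent is exactly what is needed to absorb the possible exponential growth coming from the hyperbolic directions ($\nu_i<0$) and the polynomial growth from the parabolic direction, uniformly in $t$. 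Fourth, for the large-time bound $\int_0^\infty \|e^{-tK_V}\|\,dt \le c/\sqrt B$, combine the short-time estimate with a long-time exponential decay estimate: when $\mathrm{Tr}_-+\lambda_1\ne 0$ the spectrum of $K_V$ has a positive real part bounded below by a quantity comparable to $\sqrt B$ (this is the spectral gap, again read off from the eigenvalues of the $\mathbb{C}^4$ flow plus the contribution of $O_p$), which makes the time integral converge with the stated rate; the identification $\|K_V^{-1}\| \le \int_0^\infty \|e^{-tK_V}\|\,dt$ is then just the standard Laplace-transform formula for the inverse of a maximal accretive operator with spectrum in $\{\mathrm{Re}\, z>0\}$.

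The main obstacle I anticipate is obtaining the constants with the \emph{exact} powers and scalings stated — in particular producing the weight $\langle\partial_q V\rangle^{2/3}$ and $\langle D_q\rangle^{2/3}$ with the $2/3$ exponent (rather than the naive $1/2$ or $1$) uniformly in the size of the coefficients $\nu_i$ and $\lambda_1$, and matching the precise definitions of $A$ (with its $(1+\mathrm{Tr}_+)^{2/3}$ vs.\ $1+\mathrm{Tr}_-$ dichotomy) and $B$ (with the logarithmic correction $(1+\mathrm{Tr}_-)/\log(2+\mathrm{Tr}_-)^2$). These exponents come from optimizing the Mehler-formula bounds over the relevant frequency scales, and the logarithmic factor in $B$ in particular signals a delicate borderline between the hyperbolic decay/growth rate and the polynomial rate; getting this optimization right, and making sure the one-dimensional estimates patch together without losing a factor depending on $d$ or on the number of negative/positive eigenvalues, is where the real work lies. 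Once Proposition~\ref{prop1.1} is established in this form, Theorems~\ref{thm1.1} and~\ref{thm1.2} follow by the standard route: write $K_V^{-1}$ (or $(K_V+\sqrt A)^{-1}$) as a time integral of the semigroup, apply the weighted bounds, and use the maximal accretivity together with the identity $\|O_p u\|^2+\|X_V u\|^2 \le \|K_V u\|^2 + (\text{commutator terms})$ to convert the resolvent estimates into the claimed subelliptic estimates.
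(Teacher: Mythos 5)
Your overall strategy coincides with the paper's: separation of variables reduces Proposition~\ref{prop1.1} to the three one-dimensional models $V(q)=\pm\frac{\nu q^{2}}{2}$ and $V(q)=\lambda_{1}q$, which are then treated by exact metaplectic/Mehler analysis of the quadratic evolution (quaternionic computation of the Hamilton flow, Bargmann transform). But your proposal stops exactly where the proof starts, and the two mechanisms you do commit to are not correct. First, the smoothing heuristic ``$t^{-1/2}$ per derivative'' is wrong in the $q$-direction: regularity in $q$ is generated only through the bracket with $O_{p}$, so a \emph{single} weight $|D_{q_i}|$ or $|\partial_{q_i}V|$ already costs $t^{-3/2}$ (this is precisely what Proposition~\ref{prop3.1}, Lemma~\ref{lem3.3} and the lemma of Section~\ref{sec:deg} establish, e.g. $\|(D_q^{2}+\lambda_1^{2})e^{-t(K_1+1)}\|\le c\,t^{-3}$), and the $2/3$ exponents in the theorems are afterwards obtained from this $t^{-3/2}$ bound by Lunardi interpolation, not the other way around. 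Second, the precise form of $A$ is not a matter of bookkeeping: the general positivity argument (shift $\sqrt{\nu}$, Proposition~\ref{prop3.1}) would only give $A\sim 1+\mathrm{Tr}_{+}+\mathrm{Tr}_{-}$; to reach $(1+\mathrm{Tr}_{+})^{2/3}$ one needs the improved shift $\nu^{1/3}$ in the elliptic case $\nu\gg1$, proved by a separate explicit computation on the Bargmann side (Lemma~\ref{lem3.3}). Your plan does not address this dichotomy.

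More seriously, your justification of $\int_{0}^{\infty}\|e^{-tK_V}\|\,dt\le c/\sqrt{B}$ by ``a spectral gap comparable to $\sqrt{B}$'' fails for this non-normal operator, in both regimes. In the hyperbolic case $V=-\frac{\nu q^{2}}{2}$ the spectral gap is of order $\sqrt{\nu}$ while $\sqrt{B}\sim\sqrt{\nu}/\log\nu$: the loss is a transient, not spectral, effect, since the exact norm $\|e^{-tK_{\nu,0}}\|=e^{-\operatorname{Argsh}(S(t))}$ decays essentially like $e^{-t/2}$ until $t\sim\log(\nu)/\sqrt{\nu}$, and integrating this exact expression is what produces the logarithm; the quasimode construction of Section~4.1 shows the logarithmic factor is genuinely present (so a gap-based bound $c/\sqrt{\mathrm{Tr}_{-}}$ would be false). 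In the degenerate case the gap is $\lambda_1^{2}/2$, yet the integral is only $O(\lambda_1^{-2/3})$, governed by the short-time decay $e^{-(t/2-\tanh(t/2))\lambda_1^{2}}\approx e^{-t^{3}\lambda_1^{2}/24}$, so again neither is the gap comparable to $\sqrt{B}$ nor does it control the integral. For non-self-adjoint quadratic operators a spectral gap gives no uniform control on $\|e^{-tK_V}\|$; the paper replaces this step by the exact computation of the semigroup norm (via the singular values of the metaplectic evolution, Theorem 1.3 of \cite{Vio2}, and the explicit norm of $e^{-i(t_1+it_2)P_b}$ in the degenerate case) followed by a direct integration split at the transition time. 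Filling in these computations is the actual content of Sections 3--5, which your proposal leaves open.
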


\subsection*{Acknowledgements} Joe Viola is grateful for the support of the R\'egion Pays de la Loire through the project EONE (Evolution des Op\'erateurs Non-Elliptiques).

\section{Reduction to a one-dimensional problem}

Interpolation results of Lunardi (see   Remark ~5.11, Theorem 5.12 and Corollary~5.13 in \cite{Lun}) show
 that the first inequality of Proposition~\ref{prop1.1} combined with the fact that 
\begin{align*}
|\mathbb{R}\text{e}\langle [O_p,X_V]u,u\rangle| \le C_{\epsilon}(\|\;|D_q|^{\frac{2}{3}}u\|^2+\|\;|\partial_qV(q)|^{\frac{2}{3}}u\|^2)+\epsilon\|O_pu\|^2
\end{align*}
for all $u\in D(K_V)$ (where $\epsilon>0$ is small enough), implies the
subelliptic estimates given in  Theorem~\ref{thm1.1}.
Theorem~\ref{thm1.2} is then a consequence of Theorem~\ref{thm1.1} and
the second inequality of Proposition~\ref{prop1.1}.\\
Details are given below.
\begin{proof}[Proof of Proposition~\ref{prop1.1}]
Since this result  is expressed in terms of the
semigroup, it can be studied by a separation of variables for a
potential of the form~(\ref{eq1}) or (\ref{eq2}). Actually
$e^{-tK_{V}}$ is a commutative product of contraction semigroups,
and it suffices to write
$$
\sum_{i=1}^d \|M_{i}e^{-t(K_V+\sqrt{A})}\|_{\mathcal{L}(L^2(\mathbb{R}^{2d}))}
\leq 
\sum_{i=1}^{d}\|M_{i}e^{-t(K_{V(q_{i})}+\alpha_{i})}\|_{\mathcal{L}(L^2(\mathbb{R}^{2}))}
\quad M_{i}=|D_{q_{i}}|~\text{or}~M_{i}=|\partial_{q_{i}} V(q)|\,,
$$
where $V(q_{i})$ denotes the one-dimensional potential in the $q_{i}$
variable, with   $V(q_{1})=\frac{\nu_{1}q_{1}^{2}}{2}$ or
$V(q_{1})=\lambda_{1}q_{1}$\,,
$V(q_{i})=\frac{\nu_{i}q_{i}^{2}}{2}$ for $i\geq 2$\,, 
$\alpha_{i}=|\nu_{i}|^{1/2}$ if $\nu_{i}<0$\,,
$\alpha_{i}=\nu_{i}^{1/3}$ if $\nu_{i}>0$ and $\alpha_{i}=0$ if
$\partial_{q_{i}}^{2}V=0$\,. The second estimate of
Proposition~\ref{prop1.1} is even simpler. Hence
Proposition~\ref{prop1.1} will be the result of a careful analysis of
the three one-dimensional potentials  $V(q)=\pm \frac{\nu
  q^{2}}{2}$\,, $\nu>0$\,, and
$V(q)=\lambda_{1}q$\,, $\lambda_{1}\in \mathbb{R}$\,, developed in the
next sections.
\end{proof}
\begin{proof}[Proof of Theorem~\ref{thm1.1}]
In this proof we use nearly the same notations as in \cite{Lun} (Remark ~5.11, Theorem 5.12 and Corollary~5.13). Set
\[
	\begin{aligned}
	T(t)&=e^{-t(\sqrt{A}+K_{V})}~,
	\\ L^2&= L^2(\mathbb{R}^{2d})~,
	\\ E&= \lbrace u\in L^2(\mathbb{R}^{2d}), qu,\partial_qu\in L^2(\mathbb{R}^{2d})\rbrace
	\end{aligned}
\]
where $E$ is equipped with the norm 
\begin{align*}
\|u\|^2_E = \sum_{i=1}^d\||D_{q_i}|u\|_{L^2(\mathbb{R}^{2d})}^2+\||\partial_{q_{i}}V(q_{i})|u\|_{L^2(\mathbb{R}^{2d})}^{2}+\|u\|^2_{L^2(\mathbb{R}^{2d})}~.
\end{align*}
Applying Lemma~\ref{lem3.3} and Proposition~\ref{prop3.1}, we obtain by separation of variables \begin{align*}
\|T(t)\|_{\mathcal{L}(L^2,E)}\le \frac{c}{t^{\frac{3}{2}}}\;\;\;\text{ for all}\;\;\; t>0~{.}
\end{align*}
If $m=3$ and $\beta=\frac{1}{2}$, then by Theorem 5.12 in \cite{Lun}, one has the following embedding of  real interpolation spaces \begin{align}\Big(L^2,D\Big((\sqrt{A}+K_{V})^3\Big)\Big)_{\frac{\theta}{2},p}\subset \Big(L^2,E\Big)_{\theta,p}\label{s}\end{align} for all $\theta \in (0,1),\;p\in[1,+\infty].$
In particular for  $\theta=\frac{2}{3}$, 
\begin{align}
[L^2,E]_{\frac{2}{3}}=(L^2,E)_{ \frac{2}{3},2}={\lbrace u\in L^2,\;|D_{q_i}|^{\frac{2}{3}}u\in L^2,\;|\partial_{q_i}V(q_i)|^{\frac{2}{3}}u\in L^2\;\;\text{for all} \;1\le i\le d\rbrace~,}\label{i}
\end{align}
where the complex interpolation space $[L^2,E]_{\frac{2}{3}}$ is equipped with the norm
\begin{align*}
\|u\|_{[L^2,E]_{\frac{2}{3}}}={\sum_{i=1}^{d}\Big(\|\;|D_{q_i}|^{\frac{2}{3}}u\|_{L^2(\mathbb{R}^{2d})}^2+\|\;|\partial_{q_i}V(q_i)|^{\frac{2}{3}}u\|^2_{L^2(\mathbb{R}^{2d})}\Big)+\|u\|^2_{L^2(\mathbb{R}^{2d})}~.}
\end{align*}
Moreover in view of  Remark ~5.11 and Corollary 5.13 in \cite{Lun},\begin{align}\Big(L^2,D\Big((\sqrt{A}+K_{V})^3\Big)\Big)_{\frac{1}{3},2}=D(\sqrt{A}+K_{V})\label{w}
\end{align}   (since $L^2$ is a Hilbert space and $(\sqrt{A}~+~K_{V})$ is a maximal accretive operator).
Thus taking into account (\ref{s}), (\ref{i}) and (\ref{w}) \begin{align*}
D(\sqrt{A}+K_{V})\subset {\lbrace u\in L^2,\;|D_{q_i}|^{\frac{2}{3}}u\in L^2,\;|\partial_{q_i}V(q_i)|^{\frac{2}{3}}u\in L^2\;\;\text{for all} \;1\le i\le d\rbrace~{.}}
\end{align*}
Hence there exists a constant $c>0$ such that
\begin{align}
{\sum_{i=1}^{d}\Big(\|\;|D_{q_i}|^{\frac{2}{3}}u\|^2+\|\;|\partial_{q_i}V(q_i)|^{\frac{2}{3}}u\|^2_{L^2}\Big)}\le c\|(\sqrt{A}+K_{V})u\|^2_{L^2}\label{eq5}
\end{align}
holds for all $u\in D(K_V)$.

Write for $u\in D(K_V)$, \begin{align}
\|(\sqrt{A}+K_{V})u\|^2_{L^2}=\|(\sqrt{A}+O_p)u\|^2_{L^2}+\|X_Vu\|^2_{L^2}+2\mathbb{R}\text{e}\langle [O_p,X_V]u,u\rangle{~,}
\label{eq6}
\end{align}
so
\begin{align}
|2\mathbb{R}\text{e}\langle [O_p,X_V]u,u\rangle|&\le\sum_{i=1}^d\Big|\mathbb{R}\text{e}\langle
                              u,\Big(D_{p_i}D_{q_i}+p_i\partial_{q_i}V(q)\Big)u\rangle\Big|\nonumber\\&\le
                                                                                                            \sum_{i=1}^d|\mathbb{R}\text{e}\langle
                                                                                                            u,(D_{p_i}D_{q_i})u\rangle|
                                                                                                            +|\mathbb{R}\text{e}\langle
                                                                                                            u,p_i\partial_{q_i}V(q)u\rangle|\nonumber\\&\le
                                                                                                                                                           \sum_{i=1}^d
                                                                                                                                                           \langle
                                                                                                                                                           u,|p_i||\partial_{q_i}V(q)|u\rangle
                                                                                                                                                           +\langle
                                                                                                                                                           u,|D_{p_i}||D_{q_i}|u\rangle\nonumber\\&\le
                                                                                                                                                                                                    \sum_{i=1}^d\epsilon\langle
                                                                                                                                                                                                    u,|p_i|^4u\rangle+c_{\epsilon}\langle
                                                                                                                                                                                                    u,|\partial_{q_i}V(q)|^{\frac{4}{3}}u\rangle+\epsilon\langle
                                                                                                                                                                                                    u,|D_{p_i}|^4u\rangle+c_{\epsilon}\langle 
                                                                                                                                                                                                   u,|D_{q_i}|^{\frac{4}{3}}u\rangle\label{eq3}\\&\le
                                                                                                                                                                                                                                                    c\Big(\epsilon\|O_pu\|^2_{L^2}+c_{\epsilon}\|(\sqrt{A}+K_{V})u\|^2_{L^2}\Big)
\,,
\nonumber
\end{align}
where (\ref{eq3}) is due to the Young inequality $ts\le \frac{1}{4}t^4 +\frac{3}{4}t^{\frac{3}{4}}$ for all $t,s\ge 0$ and  the last line  is  a consequence of $(\ref{eq5})$.

Therefore, combining the last inequality with
(\ref{eq6}), we obtain
\begin{align*}
\|(\sqrt{A}+K_{V})u\|^2_{L^2}&\ge \|(\sqrt{A}+O_p)u\|^2_{L^2}+\|X_Vu\|^2_{L^2}-c\Big[\epsilon\|O_pu\|^2_{L^2}+c_{\epsilon}\|(\sqrt{A}+K_{V})u\|^2_{L^2}\Big]\\&\ge      (1-c\epsilon)\|(\sqrt{A}+O_p)u\|^2_{L^2}+\|X_Vu\|^2_{L^2}-cc_{\epsilon}\|(\sqrt{A}+K_{V})u\|^2_{L^2}
\end{align*}
for all $u\in D(K_{V})$.

To complete the proof, it is enough to use the above inequality with (\ref{eq5}) and the fact that
\begin{align*}2\Big(A\|u\|^2_{L^2}+\|K_{V}u\|^2_{L^2}\Big)\ge \|(\sqrt{A}+K_{V})u\|^2_{L^2}
\end{align*}
for all $u\in D(K_{V})$.
\end{proof}

\begin{proof}[Proof of Theorem~\ref{thm1.2}]
If $\text{Tr}_{-}+\lambda_1\not=0$, by Proposition~\ref{prop1.1}, there exists a constant $c>0$ such that 
\begin{align*}
\|K_{V}^{-1}\|_{\mathcal{L}(L^2(\mathbb{R}^{2d}))}=\|\displaystyle\int_0^{+\infty}e^{-tK_{V}}dt\|_{\mathcal{L}(L^2(\mathbb{R}^{2d}))}&\le\displaystyle\int_0^{+\infty}\|e^{-tK_{V}}\|_{\mathcal{L}(L^2(\mathbb{R}^{2d}))}dt\\&\le\frac{c}{\sqrt{B}}~.
\end{align*}
Consequently, for all $u\in D(K_V)$,
\begin{align*}
\|u\|^2_{L^2(\mathbb{R}^{2d})}\le\frac{c}{B}\|K_V\|^2_{L^2(\mathbb{R}^{2d})}~.
\end{align*}
Combining the above inequality, along with (\ref{eq4}), one gets immediately the global subelliptic estimates 
\begin{align}
\|K_Vu\|^2_{L^2(\mathbb{R}^{2d})}\ge \frac{c}{1+\frac{A}{B}}\Big(\|O_pu\|^2_{L^2(\mathbb{R}^{2d})}&+\|X_Vu\|^2_{L^2(\mathbb{R}^{2d})}\nonumber\\&+\|\langle\partial_q V(q)\rangle^{2/3}u\|^2_{L^2(\mathbb{R}^{2d})}+\|\langle D_q\rangle^{2/3}u\|_{L^2(\mathbb{R}^{2d})}\Big)\label{2.444}
\end{align}
for all $u\in D(K_V).$

\end{proof}

\section{Subelliptic estimates with remainder for non-degenerate
  one-dimensional potentials}

 The operator $K_V$ with a potential $V(q)=\mp \frac{\nu q^{2}}{2}=-e^{2i\alpha}\frac{\nu q^2}{2}$ (where $\nu>0$ is a parameter and $\alpha\in\lbrace 0,\frac{\pi}{2}\rbrace$), is unitarily equivalent to 
 \begin{align*}
K_{\nu,\alpha}&=\frac{1}{2}(-\partial_p^2+p^2)+\Big(e^{i\alpha}\sqrt{\nu}\Big)e^{-i\alpha}\Big(p\partial_q+e^{2i\alpha}q\partial_p\Big)\\&=O_p+zX_{\alpha}
\end{align*}
where $z:=e^{i\alpha}\sqrt{\nu}$ and $X_{\alpha}:=i(e^{-i\alpha}pD_q+e^{i\alpha}qD_p)$. Actually introducing the possibly complex
parameter $z$ allows us to use the same computations for both cases
because they involve entire functions of $z\in\mathbb{C}$\,. On the other hand,
some identities make sense only when $\alpha\in\left\{0,\frac{\pi}{2}\right\}$\,, particularly those involving $O_q$ (the harmonic oscillator in $q$) or the symplectic product.
Below we sum up the cases to be studied:
$$\begin{tabular}{|l|c|r|}
  \hline
  $V(q)$ & $\alpha$ & $z$ \\
  \hline
  $\frac{-\nu q^2}{2}$ & $0$ & $\sqrt{\nu}$ \\\hline
  $\frac{+\nu q^2}{2}$ & $\frac{\pi}{2}$ & $i\sqrt{\nu}$\\
  \hline
\end{tabular}
$$
In this one dimensional case, we use the following notations:
\begin{eqnarray*}
&&O_q=\frac{1}{2}(D_q^2+q^2)\quad,\quad
O_{e^{i\alpha}q}=\frac{1}{2}(e^{-2i\alpha}D_{q}^{2}+e^{2i\alpha}q^{2})\quad,\quad
O_p=\frac{1}{2}(D_p^2+p^2)\,,\\
&&
X_{\alpha}=i(e^{-i\alpha}pD_q+e^{i\alpha}qD_p)
\quad,\quad Y_{\alpha}= i(e^{i\alpha}pq -e
^{-i\alpha} D_qD_p)\,,
\end{eqnarray*}
where $\alpha\in\left\{0,\frac{\pi}{2}\right\}$ and
$O_{e^{i\alpha}q}=e^{2i\alpha}O_{q}$ in the final applications.

The Hamilton map written as a matrix equals
\begin{align*}
H_Q&:=
\begin{pmatrix}
\mathbf{q}''_{\xi x}&\mathbf{q}''_{\xi\xi}\\ \\-\mathbf{q}''_{xx}&-\mathbf{q}''_{x\xi}
\end{pmatrix}\,,
\end{align*}
where $\mathbf{q}(q, p, \xi_q, \xi_p)$ is the
Weyl-symbol of the operator $Q$, meaning $Q =
\mathbf{q}^w(q, p, D_q, D_p)=q^{w}(x,D_{x})$\,, $x=(q,p)$: 
$$
Qu(x)=\int_{\mathbb{R}^{4d}}e^{i(x-x').\xi}q\left(\frac{x+x'}{2},\xi\right)u(x')~\frac{d\xi}{(2\pi)^{2d}}dx'\,.
$$
Noticing that 
$O_{p}$, $O_{q}$, $O_{e^{i\alpha}q}$\,, $X_{\alpha}$, $Y_{\alpha}$ and
$K_{\nu,\alpha}$ have quadratic symbols, the corresponding Hamilton maps
are written accordingly $H_{O_p}$, $H_{O_q}$, $H_{O_{e^{i\alpha}q}}$, $H_{X_\alpha}$, $H_{Y_\alpha}$ and $H_{K_{\nu,\alpha}}$\,.
Let $E:=H_{O_{e^{i\alpha}q}-O_{p}}$,
$I:=H_{-O_{e^{i\alpha}q}-O_p}$,
$J:=H_{-X_{\alpha}}$, and $K:=H_{Y_{\alpha}}$
denote respectively the Hamiltonian matrices associated to the
operators $O_{e^{i\alpha}q}-O_p$,  $-O_{e^{i\alpha}q}-O_p$, $-X_{\alpha}$ and $Y_{\alpha}$. Then one has 
\begin{align*}
&E=\begin{pmatrix}
0&0&e^{-2i\alpha}&0\\0&0&0&-1\\-e^{2i\alpha}&0&0&0\\0&1&0&0
\end{pmatrix},\quad I=\begin{pmatrix}
0&0&-e^{-2i\alpha}&0\\0&0&0&-1\\e^{2i\alpha}&0&0&0\\0&1&0&0
\end{pmatrix},
\\&
J=\begin{pmatrix}
0&-ie^{-i\alpha}&0&0\\-ie^{i\alpha}&0&0&0\\0&0&0&ie^{i\alpha}\\0&0&ie^{-i\alpha}&0
\end{pmatrix},\quad K=\begin{pmatrix}
0&0&0&-ie^{-i\alpha}\\0&0&-ie^{-i\alpha}&0\\0&-ie^{i\alpha}&0&0\\-ie^{i\alpha}&0&0&0
\end{pmatrix}~.
\end{align*}
Note that $E$ commutes with $I,J,K$ and $IJ=K$ with the relations
\begin{eqnarray}
\label{eq.IJK2}
&&E^2=I^2=J^2=K^2 =-1\;\text{for~all}\;\alpha\in\mathbb{R}\,,\\
\label{eq.IJKbar}
\text{and}&&
\overline{E}=E,\; \overline{I}=I,\;\overline{J}=-e^{2i\alpha}J,\;\overline{K}=-e^{2i\alpha}K\quad\text{when}
~\alpha\in\{0,\pi/2\}\,.
\end{eqnarray}
These relations, $IJ=K$, and \eqref{eq.IJK2} ensure that $(1,I,J,K)$ can be considered algebraically as a basis of (bi-)quaternions.
Note in particular that
\begin{eqnarray*}
&&H_{O_p}=-\frac{1}{2}(E+I) \quad,\quad H_{X_{\alpha}}=-J\\
&& H_{Y_{\alpha}} =K\quad,\quad
H_{K_{\nu,\alpha}}=-\frac{1}{2}(E+I+2zJ)
\end{eqnarray*}
for all $\alpha \in \mathbb{R}$, while the relations 
$$
\begin{pmatrix}
0&-\text{Id}_{\mathbb{R}^2}\\\text{Id}_{\mathbb{R}^2}&0
\end{pmatrix}=\sin(\alpha)E+\cos(\alpha)I
\quad,\quad H_{O_q}=\frac{e^{2i\alpha}}{2}(E-I)$$
hold for $\alpha\in \{0,\frac{\pi}{2}\}$\,.

The commutation property with the matrix $E$ can be interpreted  as follows at the operator level:
consider the two commutators 
$\Big[O_p,X_{\alpha}\Big]=iY_{\alpha}$ and
$\Big[O_{e^{i\alpha}q},X_{\alpha}\Big]=iY_{\alpha}$.
Then the operator $O_{e^{i\alpha}q}-O_p$ commutes with $O_p$ and
$X_{\alpha}$\,. Once this reduction is done, the quaternionic
structure can be guessed as well from the operator level after
computing all the commutators of $O_p$, $O_{e^{i\alpha}q}$, $X_\alpha$
and $Y_\alpha$\,.

\subsection{General estimate when  $V(q)=\pm \frac{\nu q^2}{2}$\,, $\nu>0$}
\begin{prop}\label{prop3.1}
Let $\nu>0$  be a  parameter and $\alpha\in\left\{0,\frac{\pi}{2}\right\}$. 
There exists a constant $C>0$, independent of  $\nu$, such that
\begin{align*}
\|\sqrt{\nu\;O_q}\;e^{-t(K_{\nu,\alpha}+\sqrt{\nu})}\|_{\mathcal{L}(L^2(\mathbb{R}^{2})}\le \frac{C}{t^{\frac32}}
\end{align*}

{holds} for all $t> 0$.
\end{prop}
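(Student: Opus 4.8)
The plan is to compute the contraction semigroup $e^{-tK_{\nu,\alpha}}$ explicitly from the quaternionic structure above, and then to estimate the $L^2$-operator norm of the quadratic-exponential operator $\sqrt{\nu O_q}\,e^{-tK_{\nu,\alpha}}$ (bounded for $t>0$) by means of the metaplectic/Bargmann calculus for quadratic symbols used in \cite{AlVi}, \cite{Vio2}, \cite{HPV2}. Note first that $K_{\nu,\alpha}$ is accretive, since $zX_\alpha=i\sqrt\nu\,(pD_q+e^{2i\alpha}qD_p)$ is anti-selfadjoint for $\alpha\in\{0,\pi/2\}$, so $\|e^{-tK_{\nu,\alpha}}\|_{\mathcal L(L^2(\mathbb R^2))}\le1$ for all $t\ge0$. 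Since $K_{\nu,\alpha}=O_p+zX_\alpha$ is the Weyl quantization of a quadratic form with nonnegative real part, $e^{-tK_{\nu,\alpha}}$ is the metaplectic-semigroup element attached to the complex symplectic flow generated by $H_{K_{\nu,\alpha}}=-\tfrac12(E+I+2zJ)$. Because $E$ commutes with $I$ and $J$, and because $IJ=-JI=K$, $I^2=J^2=-1$ give $(I+2zJ)^2=-(1+4z^2)$, one obtains in closed form, with $w:=\sqrt{1+4z^2}$,
\[
e^{-s(E+I+2zJ)}=\bigl(\cos s-E\sin s\bigr)\Bigl(\cos(ws)-\tfrac{\sin(ws)}{w}(I+2zJ)\Bigr),
\]
for the real multiple $s$ of $t$ fixed by the normalization (the $w=0$ and $\alpha$-dependent values read by continuity; for $\alpha=0$, $w=\sqrt{1+4\nu}$ is real, while for $\alpha=\pi/2$, $w=\sqrt{1-4\nu}$ becomes purely imaginary as soon as $\nu>1/4$). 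Conjugating $O_q$, whose Hamilton map is $\tfrac{e^{2i\alpha}}{2}(E-I)$, by this flow then produces another quadratic operator whose Hamilton map is an explicit element of the quaternion algebra; equivalently one passes to the Bargmann realization, where $e^{-tK_{\nu,\alpha}}$ is a scalar multiple of a composition operator $f\mapsto f\circ\Phi_t$ with $\Phi_t$ linear and conjugate to the matrix above.

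Using $e^{-t(K_{\nu,\alpha}+\sqrt\nu)}=e^{-t\sqrt\nu}e^{-tK_{\nu,\alpha}}$ and $O_q=\tfrac12(D_q^2+q^2)$, one has
\[
\bigl\|\sqrt{\nu O_q}\,e^{-t(K_{\nu,\alpha}+\sqrt\nu)}\bigr\|_{\mathcal L(L^2)}^2=e^{-2t\sqrt\nu}\,\bigl\|e^{-tK_{\nu,\alpha}^{*}}(\nu O_q)e^{-tK_{\nu,\alpha}}\bigr\|_{\mathcal L(L^2)}.
\]
The operator on the right is selfadjoint and nonnegative, and, being built from metaplectic-semigroup elements and the quadratic operator $\nu O_q$, its norm — the top of its spectrum — is an explicit function $N(t,z)$ of $t$ and $z$, computable from the data of the previous paragraph (the relevant symplectic singular value of the conjugated quadratic form; or, on the Bargmann side, the norm of the explicit Gaussian-type kernel of $\sqrt{\nu O_q}\,e^{-tK_{\nu,\alpha}}$, obtained by Schur's test or diagonalization). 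It then remains to prove $e^{-2t\sqrt\nu}N(t,z)\le Ct^{-3}$ for all $t>0$, uniformly in $\nu>0$, for $z=\sqrt\nu$ (case $\alpha=0$) and $z=i\sqrt\nu$ (case $\alpha=\pi/2$). I would split short time $t\lesssim(1+\sqrt\nu)^{-1}$ from long time; in the long-time range the bound $\|e^{-tK_{\nu,\alpha}}\|\le1$, the smoothing already available at a fixed time, and the decay factor $e^{-2t\sqrt\nu}$ suffice.

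The crux is the uniform-in-$\nu$ estimate of $N(t,z)$ at short time, where it must reproduce exactly the $t^{-3}$, that is, the $t^{-3/2}$ of the statement. That exponent is the usual Kramers--Fokker--Planck subelliptic gain of a full power of $D_q$ and of $q$: already $[O_p,X_\alpha]$ only produces $pq$ and $D_qD_p$, so several nested commutators with $O_p$ are needed to liberate the variables $q$ and $D_q$, and in the closed-form data this manifests as the relevant coefficients vanishing at $t=0$ to exactly the order forcing the $t^{-3/2}$ blow-up. The delicate point is that these coefficients carry the frequency $|w|\sim\sqrt\nu$, so one must check that no spurious positive power of $\nu$ survives as one crosses the transition regime $\sqrt\nu\,t\sim1$, and verify in the case $\alpha=\pi/2$, $\nu>1/4$ that the genuine exponential factors then present are dominated by the prefactor $e^{-2t\sqrt\nu}$. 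I expect the careful bookkeeping of these cancellations, uniformly in $\nu$, to be where the bulk of the effort goes.
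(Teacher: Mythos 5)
Your framework is sound and close in spirit to the paper's (biquaternionic closed form for the flow generated by $H_{K_{\nu,\alpha}}$, reduction to a Gaussian/metaplectic computation, splitting at $t_0\sim(1+\sqrt\nu)^{-1}$, absorbing long times with $\|e^{-tK_{\nu,\alpha}}\|\le1$ and the factor $e^{-t\sqrt\nu}$), but there is a genuine gap: the quantity $N(t,z)=\|e^{-tK_{\nu,\alpha}^{*}}(\nu O_q)e^{-tK_{\nu,\alpha}}\|$, whose uniform-in-$\nu$ short-time bound $N(t,z)\le Ct^{-3}$ for $0<t\lesssim(1+\sqrt\nu)^{-1}$ is exactly the content of the proposition, is never computed or estimated. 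You assert that the relevant coefficients vanish at $t=0$ to just the right order and that no spurious power of $\nu$ survives across the regime $\sqrt\nu\,t\sim1$, but this $\nu$-uniform cancellation is precisely what must be proved; deferring it to ``careful bookkeeping'' leaves the heart of the statement unestablished. Two secondary points also need attention: the identity $\|\sqrt{\nu O_q}\,e^{-tK_{\nu,\alpha}}\|^{2}=\|e^{-tK_{\nu,\alpha}^{*}}(\nu O_q)e^{-tK_{\nu,\alpha}}\|$ presupposes that $e^{-tK_{\nu,\alpha}}$ maps $L^2$ boundedly into $D(O_q^{1/2})$ for $t>0$, which itself requires an argument (e.g. the compactness/Mehler input of \cite{HiPr}) before the norm identity can be invoked; and the claim that $e^{-tK_{\nu,\alpha}}$ becomes a composition operator under the standard Bargmann transform is justified in the paper only for $\alpha=\frac{\pi}{2}$ (Lemma~\ref{le:3.1}); the hyperbolic case $\alpha=0$ requires its own positive Lagrangian splitting and weight, whose interaction with $O_q$ must be tracked.

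For comparison, the paper sidesteps the direct evaluation of $N(t,z)$ with a cleaner device (Lemma~\ref{le:delta0}): one checks that the composed canonical transformation $\kappa_0(\delta)\kappa(t)$ is strictly positive for $0\le\delta<\delta_0(t)$, which by \cite{Vio2} yields the exponential-weight contraction $\|e^{\delta O_q}e^{-tK_{\nu,\alpha}}\|\le 1$; the positivity is reduced, via the quaternion basis and the spectral decomposition of $E$, to the sign of an explicit $2\times 2$ determinant, and the only quantitative input then needed is the Taylor-expansion lower bound $\delta_0(t)\ge c\nu t^{3}$ for $0<t\le t_0$. The proposition then follows from the factorization $\sqrt{\nu O_q}\,e^{-tK_{\nu,\alpha}}=\sqrt{\nu/\delta}\,\bigl(\sqrt{\delta O_q}e^{-\delta O_q}\bigr)\bigl(e^{\delta O_q}e^{-tK_{\nu,\alpha}}\bigr)$ together with the elementary bound $\|\sqrt{\delta O_q}e^{-\delta O_q}\|\le c$. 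To complete your route you would have to carry out the analogue of this quantitative step, e.g. by diagonalizing the explicit positive quadratic form underlying $e^{-tK_{\nu,\alpha}^{*}}(\nu O_q)e^{-tK_{\nu,\alpha}}$ on the FBI side (as the paper does in Lemma~\ref{lem3.3} for $\alpha=\frac{\pi}{2}$) and verifying the $\nu$-uniform $t^{-3}$ bound; as it stands, that step is missing.
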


\begin{lem}
\label{le:delta0}
One can find a function $\delta_0(t)>0$\,, specified below in \eqref{eq:delta0}\eqref{eq:A(t)}, defined in  $[0,+\infty[$ such that for all $\delta(t)\in [0,\delta_0(t)[$

 $$\|e^{\delta(t)O_q}e^{-tK_{\nu,\alpha}}\|_{\mathcal{L}(L^2(\mathbb{R}^2))}\le
 1$$ is satisfied for all $t>0$.
\end{lem}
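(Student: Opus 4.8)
The plan is to exploit the quadratic (metaplectic) nature of everything in sight. Since $K_{\nu,\alpha}$ has quadratic Weyl symbol, $e^{-tK_{\nu,\alpha}}$ is (up to a scalar) a metaplectic operator, and conjugating the quadratic form $O_q$ by it produces another quadratic operator whose symbol is obtained by pushing the Hamilton map $H_{O_q}$ forward under $\exp(tH_{K_{\nu,\alpha}})$. Concretely, I would first compute $Q(t) := e^{tK_{\nu,\alpha}} O_q e^{-tK_{\nu,\alpha}}$ at the level of Hamilton maps: using $H_{K_{\nu,\alpha}}=-\tfrac12(E+I+2zJ)$ and $H_{O_q}=\tfrac{e^{2i\alpha}}{2}(E-I)$ together with the quaternion relations \eqref{eq.IJK2} and $IJ=K$, the matrix $e^{-tH_{K_{\nu,\alpha}}}H_{O_q}e^{tH_{K_{\nu,\alpha}}}$ can be evaluated in closed form because $E$ is central and $(I,J,K)$ generate a copy of the quaternions; the exponential $e^{t(E+I+2zJ)/2}$ is an explicit trigonometric/hyperbolic expression in $t$ and $z$. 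This yields an explicit time-dependent quadratic form $Q(t)$, hence an explicit positive quadratic form $R_{\delta}(t)$ whose Weyl quantization is the generator one must control for $e^{\delta(t)O_q}e^{-tK_{\nu,\alpha}}$.

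Next I would reduce the operator-norm bound to a statement about symbols. The operator $e^{\delta(t)O_q}e^{-tK_{\nu,\alpha}}$ is, again up to a computable scalar prefactor, the Weyl quantization of a Gaussian; by the standard criterion for when a Gaussian is the symbol of an $L^2$-contraction (positivity of the associated Hermitian form on the FBI/Bargmann side, or equivalently that the Gaussian symbol $g(x,\xi)$ satisfies $|g|\le 1$ pointwise after the correct normalization, cf.\ the results of \cite{HPV2}, \cite{Vio2}, \cite{AlVi} on quadratic symbols), the norm is $\le 1$ exactly when a certain matrix inequality holds. I would phrase $\delta_0(t)$ as the supremum of $\delta$ for which this matrix inequality is satisfied, so that the lemma becomes true essentially by definition once $\delta_0(t)$ is shown to be strictly positive. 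The positivity of $\delta_0(t)$ for every $t>0$ follows from continuity: at $\delta=0$ the operator $e^{-tK_{\nu,\alpha}}$ is a contraction (as $K_{\nu,\alpha}$, being unitarily equivalent to $K_V+$const with $K_V$ accretive, is accretive), the relevant matrix inequality is strict there (semigroup is a strict contraction in the relevant directions because $O_p$ contributes genuine dissipation), and the inequality is an open condition in $\delta$.

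The formulas \eqref{eq:delta0} and \eqref{eq:A(t)} referenced in the statement should then record the explicit value: I would present $\delta_0(t)$ through an auxiliary function $A(t)$ built from the entries of $e^{t(E+I+2zJ)/2}$, i.e.\ from $\cosh, \sinh$ (or $\cos,\sin$) of $t\sqrt{1-z^2}$-type quantities depending on whether $\alpha=0$ or $\alpha=\pi/2$, and set $\delta_0(t)$ to be the threshold at which the Gaussian symbol of $e^{\delta O_q}e^{-tK_{\nu,\alpha}}$ first fails to have modulus $\le 1$. The main obstacle I anticipate is purely computational bookkeeping: tracking the scalar (metaplectic) prefactor correctly and identifying the precise positivity/contraction criterion for the Gaussian symbol so that the threshold $\delta_0(t)$ has a clean enough closed form to be usable in the later asymptotic analysis (as $t\to 0^+$ and $t\to+\infty$) that feeds into Proposition~\ref{prop3.1}. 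The conceptual content — commuting quadratic forms via their Hamilton maps plus the quaternion relations, then a contraction criterion for Gaussians — is routine; getting $\delta_0(t)$ explicit and sharp is where the work lies.
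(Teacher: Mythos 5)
Your plan follows essentially the same route as the paper's proof: use the exact correspondence between quadratic operators and complex canonical transformations, impose the strict positivity condition of \cite{Vio2} on the composed flow $\kappa_0(\delta)\kappa(t)$ as the criterion for compactness and norm $\le 1$, carry out the computation explicitly via the biquaternion relations (with $E$ central, reducing to $2\times2$ blocks), and define $\delta_0(t)$ as the threshold where positivity first fails, propagating positivity from $\delta=0$ by continuity/openness. The one point to treat with more care is that the strictness at $\delta=0$ is not merely ``genuine dissipation from $O_p$'' but is imported in the paper from the known compactness of $e^{-tK_{\nu,\alpha}}$ (\cite{HeNi}, \cite{HiPr}), and that the threshold must be identified with the first zero of the determinant of the relevant Hermitian matrix, which is what produces the explicit formulas \eqref{eq:delta0}, \eqref{eq:A(t)} needed later for the lower bound $\delta_0(t)\gtrsim \nu t^3$.
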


\begin{proof}

The exact classical quantum
correspondence, valid for $Q_j = q^w_j, j = 1, 2, 3$, when $q_j$ are complex-valued quadratic forms with associated Hamilton maps $H_{Q_{j}}$ and positive Hamilton flows $\exp H_{Q_j}$ (see\cite{Hor}\cite{Vio}), says that
$$ 
\exp H_{Q_1} \exp H_{Q_2} = \exp H_{Q_3}\iff e^{-iQ_1}
e^{-iQ_2} = \pm e^{-iQ_3}\,.
$$
We will determine conditions such that the canonical transformation
\[
	\exp H_{i\delta(t) O_q}\exp H_{-itK_{\nu,\alpha}}
\]
is strictly positive in the sense defined in \eqref{eq.strpos}. Working from the Hamilton flow, one can therefore compute exactly (\cite{Vio2}, Proposition 4.8) a compact operator of the form $e^{-i Q_2}$ for $Q_2$ quadratic such that
\[
	e^{-\delta(t)O_q}e^{-iQ_2} = e^{-i\delta(t)(i^{-1}O_q)}e^{-iQ_2} = \pm e^{-it(i^{-1}K_{\nu,\alpha})}=\pm e^{-tK_{\nu, \alpha}}
\]
Applying this equality to the dense set of linear combinations of Hermite functions, this shows that $e^{-tK_{\nu, \alpha}}$ takes $L^2(\Bbb{R}^2)$ to the domain of $e^{\delta(t)O_q}$ with the estimate 
\[
	\|e^{\delta(t)O_q}e^{-tK_{\nu, \alpha}}\|_{\mathcal{L}(L^2(\mathbb{R}^2))} = \|e^{-iQ_2}\|_{\mathcal{L}(L^2(\mathbb{R}^2))} \leq 1.
\]

We will compute
$$
e^{i\delta(t)H_{O_q}}e^{-it H_{K_{\nu,\alpha}}}
$$
which will be done by using biquatertionic expressions. The
compactness of $e^{-iQ_2}$, and the fact that its norm is bounded by
$1$, is a consequence of the positivity condition \eqref{eq.strpos} which will be checked explicitly.

Set, for all $t\ge 0$, $\kappa(t)=e^{-itH_{K_{\nu,\alpha}}}$ and $\kappa_0(\delta)=e^{i\delta(t)H_{O_q}}$\,, and consider the canonical transformation $$\kappa(t):=e^{-itH_{K_{\nu,\alpha}}}=e^{i\frac{t}{2}(E+I+2zJ)}$$
for all $t\ge 0$.
Let $n_1$ denote 
\[
	n_1 = \sqrt{N(I+2zJ)}=\sqrt{1+4z^2}\neq 0 \;\text{when}\; z\neq \pm \frac{i}{2}
\]
such that $\hat{v}=\frac{I+2zJ}{n_1}$ satisfies $\hat{v}^2=-1$\,.

Using the fact that $E$ commutes with $I$ and $J$, and the formula (\ref{formula1})\;,
\begin{align}\kappa(t)&=e^{i\frac{t}{2}E}e^{i\frac{t}{2}n_1\hat{v}}=e^{i\frac{t}{2}E}\Big(\ch(\frac{tn_1}{2})+i\;\frac{\sh(\frac{tn_1}{2})}{n_1}(I+2zJ)\Big)\nonumber\\& =: e^{i\frac{t}{2}E}\Big(C(t)+i\;S(t)(I+2zJ)\Big)~.\label{eq18}
\end{align}

The functions $\mathbb{R}\ni t\mapsto C(t)$ and $\mathbb{R}\ni t\mapsto S(t)$ do not depend on the choice of
the square root $\sqrt{1+4z^2}$\,, because $\ch$ is an even function
and $\sh$ an odd function. Moreover, they are real when $z\in
\mathbb{R}\cup i\mathbb{R}$\,, which corresponds to
$z=e^{i\alpha}\sqrt{\nu}$\,, $\alpha\in \left\{0,\frac{\pi}{2}\right\}$\,.

On the other hand,
 \begin{align}
\kappa_0(\delta)&=e^{-i\delta(t)H_{O_q}}=e^{\frac{i}{2}\delta(t)e^{2i\alpha}E}e^{-\frac{i}{2}\delta(t)e^{2i\alpha}I}\nonumber\\&=e^{\frac{i}{2}\delta(t)e^{2i\alpha}E}\Big(\ch(\frac{\delta(t)}{2}e^{2i\alpha})-i\;\sh(\frac{\delta(t)}{2}e^{2i\alpha})I\Big)\label{eq19}~.
\end{align}
When  $\sigma=
\begin{pmatrix}
  0&-\text{Id}\\
\text{Id}&0
\end{pmatrix}
$ denotes the matrix of the symplectic form on $\mathbb{R}^{2\times 2}$\,,
the equality 
 $\sigma=\sin(\alpha)E+\cos(\alpha)I$ holds when 
$
\alpha\in \left\{0,\frac{\pi}{2}\right\}$ (and only in those cases $\operatorname{mod} \pi$).
{
As established in \cite{Vio2}, it is possible to write
$e^{\delta(t)O_q}e^{-tK_{\nu,\alpha}}=e^{-iQ_{2}}$
with $Q_{2}=q_{2}^{w}$\,, with $e^{-iQ_2}$ a compact operator\,, when 
 the canonical transformation $\kappa_0\kappa$ satisfies the strict positivity condition}
\begin{equation}\label{eq.strpos}
i\Big[\sigma\Big(\overline{\kappa_0\kappa z},\kappa_0\kappa z\Big)-\sigma\Big(\overline{z}, z\Big)\Big]> 0\; \text{for all }\;z\in\mathbb{C}^4\setminus\lbrace 0\rbrace{~.}
\end{equation}
This condition is equivalent to the condition that the Hermitian matrix 
$$
i\Big((\kappa_0\kappa)^*\sigma\kappa_0\kappa-\sigma\Big)=i\Big(\kappa^*\kappa_0^*\sigma\kappa_0\kappa-\sigma\Big)=i\Big(\kappa^*\kappa_0\sigma\kappa_0\kappa-\sigma\Big){~,}$$
is positive definite, or equivalently that
\begin{align*}
 \kappa_0(i\sigma)\kappa_0-(\kappa^*)^{-1}(i\sigma)(\kappa)^{-1}=\kappa_0(\delta)(i\sigma)\kappa_0(\delta)-\kappa^*(-t)(i\sigma)\kappa(-t)
\end{align*}
is positive definite.
 
Since $E$ commutes with $I$, $J$ and $K$, the spectral decomposition of $E$ allows us to study 2-by-2 matrices instead of 4-by-4 matrices: $T^*_{\pm}(iE)T_{\pm}=\pm \text{Id}~$, where 
\[
	T_{\pm}=\frac{1}{\sqrt{2}}\begin{pmatrix}1&0\\0&1\\\mp ie^{\pm2i\alpha}&0\\0&\pm i\end{pmatrix},\quad T^*_{\pm}T_{\pm} =\begin{pmatrix}
1&0\\0&1
\end{pmatrix}~.
\]

Letting $$\widetilde E:= T^*_{\pm}E T_{\pm}=\mp i \text{Id}=\begin{pmatrix}
\mp i&0\\0&\mp i
\end{pmatrix},\;\;\;\widetilde I:= T^*_{\pm}I T_{\pm}=\begin{pmatrix}
\pm i&0\\0&\mp i
\end{pmatrix},$$ 
$$\widetilde J:= T^*_{\pm}J T_{\pm}=\begin{pmatrix}
0&-ie^{-i\alpha}\\-ie^{i\alpha}&0
\end{pmatrix},\;\;\;\widetilde K:= T^*_{\pm}K T_{\pm}=\begin{pmatrix}
0&\pm e^{-i\alpha}\\\mp e^{i\alpha}&0
\end{pmatrix}~,$$
we get
\begin{align}
 T_{\pm}^* \kappa_0(\delta)(i\sigma)\kappa_0(\delta)T_{\pm}=e^{\pm\delta(t)e^{2i\alpha}}\Big[\pm \sin(\alpha)c(t)-s(t)\cos(\alpha)+i\Big(\cos(\alpha)c(t)\mp\sin(\alpha)s(t)\Big)\widetilde{I}\Big]~,\label{eq16}
 \end{align}
where $c(t)=\ch(\delta(t)e^{2i\alpha})$ and $s(t)=\sh(\delta(t)e^{2i\alpha})${~.}
Similarly,
\begin{align}T^*_{\pm}\kappa^*(-t)(i\sigma)\kappa(-t)T_{\pm}=e^{\mp t}\Big[&\pm \sin(\alpha)\Big(C^2(t)+(1-(2z)^2)S^2(t)\Big)-2\cos(\alpha)C(t)S(t)\nonumber\\&+i\cos(\alpha)\Big(C^2(t)+(1-(2z)^2)S^2(t)\Big)\widetilde{I}\nonumber\mp 2i \sin(\alpha)C(t)S(t)\widetilde{I}\\&+4zi\cos(\alpha)S^2(t)\widetilde{J}\nonumber\mp 4z\sin(\alpha)S^2(t)\widetilde{K}\Big]{~.}\\&
\label{eq17}
\end{align}
Taking into account (\ref{eq16}) and (\ref{eq17}), 
\begin{align*}
T_{\pm}^*\kappa_0(\delta)(i\sigma)\kappa_0(\delta)T_{\pm}-T^*_{\pm}\kappa^*(-t)(i\sigma)\kappa(-t)
&T_{\pm}=\pm
           e^{\pm\delta(t)e^{2i\alpha}}\Big( \sin(\alpha)c(t)\mp
           \cos(\alpha)s(t)\Big)
\\
&+e^{\mp t}\Big(\mp
           \sin(\alpha)(1+2S^2)+2\cos(\alpha)CS\Big)
\\&
+i\Big[e^{\pm\delta(t)e^{2i\alpha}}\Big(\cos(\alpha)c(t)\mp\sin(\alpha)s(t)\Big)
\\
&\quad-
    e^{\mp t}\Big(\cos(\alpha)(1+2S^2(t))\mp
    2\sin(\alpha)C(t)S(t)\Big)\Big]\widetilde{I}
\\&
-4zie^{\mp t}\cos(\alpha)S^2(t)\widetilde{J}\pm4ze^{\mp
    t}\sin(\alpha)S^2(t)\widetilde{K}
\\=&e^{\mp t}\Big(a+b\widetilde{I}+c\widetilde{J}+d\widetilde{K}\Big)~.
\end{align*}

The determinant of the Hermitian matrix $e^{\pm t}\Big(T_{\pm}^* \kappa_0(\delta)(i\sigma)\kappa_0(\delta)T_{\pm}-T^*_{\pm}\kappa^*(-t)(i\sigma)\kappa(-t)T_{\pm}\Big)$ is equal to
\begin{align*} 
a^2+b^2+c^2+d^2=1-e^{\pm t}(2+4S^2-e^{\pm t})\mp e^{\pm t}(1-e^{\pm 2\delta(t)e^{2i\alpha}})\Big(2CS\mp (1+2S^2-e^{\pm t} )\Big)~.
\end{align*}
Let $\delta_0(t)>0$ be the function  which cancels the determinant, or equivalently for which one has, for all $t>0$,

\begin{align*} 
2\Big(2S^2-(\ch(t)-1)\Big)=\mp (1-e^{\pm 2\delta(t)e^{2i\alpha}})\Big(2CS+\sh(t)\mp \Big(2S^2-(\ch(t)-1)\Big)\Big){~.}
\end{align*}

After some computation, we find that this function is independent of the sign in the expression above and is given by  
\begin{eqnarray}
\label{eq:delta0}
&&\delta_{0}(t)=
   \frac{e^{-2i\alpha}}{2}\ln\Big(1-\frac{2A(t)}{2C(t)S(t)+\sh(t)+A(t)}\Big){~,}\\
\label{eq:A(t)}
\text{where}&&
A(t):=\Big(2S^2(t)-(\ch(t)-1)\Big)\,.
\end{eqnarray}
We know that, when $\delta = 0$ and $\alpha \in \{0, \frac{\pi}{2}\}$,
the Hamilton flow $\kappa(t)$ is positive because $e^{-tK_{\nu, 0}}$
is a compact operator (see \cite{HeNi}\cite{HiPr}).
 By connectedness of the set of positive definite hermitian matrices
and because the result holds for $\delta(t)=0$, the flow $\kappa_0(\delta)\kappa(t)$ is a positive canonical transformation so long as the determinant is positive on $[0, \delta]$. Therefore
 $\delta(t)\in[0,\delta_0(t) [$ implies
\begin{align*}
\|e^{\delta(t)O_q}e^{-tK_{\nu,\alpha}}\|_{\mathcal{L}(L^2(\mathbb{R}^2))}\le 1~,
\end{align*}
because any such compact Schr\"odinger evolution has norm less than 1
(see \cite{Vio2}).
\end{proof}

\begin{proof}[Proof of Proposition~\ref{prop3.1}]
When $0<\epsilon_0<1$ , there exists a constant $c>0$  independent of
$\nu$ such that 
$$\delta_0(t)\ge c\nu t^3$$ holds for all $0<t\le
t_0:=\frac{\epsilon_0}{1+|z|}=\frac{\epsilon_{0}}{1+\sqrt{\nu}}$\,.
This can be seen via the expansion 
$$
\frac{1}{2}
\ln\left(1-\frac{2A(t)}{A(t)+2C(t)S(t)+\sh(t)}\right)
=
\frac{z^2}{12}  t^3+\mathcal{O}\left((1+|2z|^2)^2t^5\right)~,
$$
which is uniform with respect to the parameter $\nu$ for all $t\in ]0,t_0]$\,. 

We write the quantity $\|\sqrt{\nu\;O_q}e^{-t(K_{\nu,\alpha}+\sqrt{\nu})}\|_{\mathcal{L}(L^2(\mathbb{R}^2))}$ in the form
\begin{align*}
& \left\{
\begin{array}{ll}
\|\sqrt{\frac{\nu}{\delta(t)}}\sqrt{\delta(t)\;O_q}\;e^{-\delta(t)O_q}
e^{\delta(t)O_q}
      e^{-tK_{\nu,\alpha}}e^{-t\sqrt{\nu}}\|_{\mathcal{L}(L^2(\mathbb{R}^2))}\;\;\;\;\text{if}\;\;\;0<t\le
      t_0\,,
\\\\
\|\sqrt{\nu}
      e^{-t\sqrt{\nu}}\sqrt{O_q}
\;e^{-\delta(t_0)O_q}
e^{\delta(t_0)O_q} 
e^{-t_0K_{\nu,\alpha}}e^{-(t-t_0)K_{\nu,\alpha}}\|_{\mathcal{L}(L^2(\mathbb{R}^2))}
\;\;\;\;\text{if}\;\;\;t\ge t_0\,,
\end{array}
\right.\end{align*}
where Lemma~\ref{le:delta0} is applied with $\delta(t)=\frac{\delta_{0}(t)}{2}$\,.
For both cases we get the upper bounds
\begin{align*}
\left\{
    \begin{array}{ll}
\underbrace{ \sqrt{\frac{\nu}{\delta(t)}}}_{\le
      \frac{\sqrt{2}}{\sqrt{c}t^{\frac32}}} 
\underbrace{\|\sqrt{\delta(t)O_q}e^{-\delta(t)O_q}\|}_{\le
      c}.
\underbrace{\|e^{\delta(t)O_q}
      e^{-tK_{\nu,\alpha}}\|}_{\le 1}.
\underbrace{e^{-t\sqrt{\nu}}}_{\le 1}
\;\;\;\;\text{if}\;\;\;0<t\le t_0\,,
\\\\
(1+\sqrt{\nu})^{\frac{3}{2}}
      e^{-t\sqrt{\nu}}
\underbrace{\|\frac{\sqrt{\nu}}{(1+\sqrt{\nu})^{\frac 3 2}}
\sqrt{O_q}e^{-\delta(t_0)O_q}\|}_{\le
      \frac{\sqrt{\nu}}{(1+\sqrt{\nu})^{\frac 3 2}\sqrt{\delta(t_0)}}}.
\underbrace{\|e^{\delta(t_0)O_q}
      e^{-t_0K_{\nu,\alpha}}\|}_{\le 1}.
\underbrace{\|e^{-(t-t_0)K_{\nu,\alpha}}\|}_{\le e^{-\frac{(t-t_{0})}{2}}}
\;\;\;\;\text{if}\;\;\;t\ge t_0\,.
\end{array}
\right.
\end{align*}
For the second case $t\geq t_{0}$\,, we use
$$
(1+\sqrt{\nu})^{\frac{3}{2}}
       e^{-t(\frac{t}{2}+\sqrt{\nu})}\times
       \frac{\sqrt{\nu}}{(1+\sqrt{\nu})^{\frac 3 2}\sqrt{\delta(t_0)}}
 \times e^{\frac{t_{0}}{2}}
\leq
\frac{c_{0}}{t^{3/2}}\times
\frac{\sqrt{2\nu}}{
(1+\sqrt{\nu})^{3/2}\sqrt{c\nu\frac{\epsilon_{0}^{3}}{(1+\sqrt{\nu})^{3}}}}
\times e^{\frac{\varepsilon_{0}}{2}}\leq \frac{c'_{0}}{t^{\frac{3}{2}}}\,.
$$
This ends the proof of Proposition~\ref{prop3.1} and gives
 \begin{align}
\|\sqrt{\nu}(|D_q|+|q|)e^{-t(K_{\nu,\alpha}+\sqrt{\nu})}\|_{L^2}\le \frac{C}{t^{\frac{3}{2}}}
\end{align}
for all $t>0$.

\end{proof}

\subsection{Improved remainder, case $V(q)=\frac{\nu q^2}{2},\; \nu\gg 1$}

In this section we follow the explicit 
methods  of  Aleman and Viola in \cite{Vio}\cite{AlVi}. Following
\cite{HSV}\cite{HPV} it makes use of an FBI
transform, which in this specific case is nothing but the usual Bargmann
transform
$$
B_{2}u(z)=\frac{1}{2^{2/2}\pi^{(3\times 2)/4}}\int_{\mathbb{R}^{2}}e^{-\frac{(z-y)^{2}-z^{2}/2}{2}}u(y)~dy
$$ 
with $B_{2}:L^{2}(\mathbb{R}^{2},dy)\to
L^{2}(\mathbb{C}^{2}; e^{-\frac{|z|^{2}}{2}}L(dz))\cap
\text{Hol}(\mathbb{C}^{2})$ unitary. 
\begin{lem}
\label{le:3.1}

For $\nu>\frac{1}{4}$\,, the adjoint operator
 \begin{align*}K^*_{\nu,\frac{\pi}{2}}=\frac{1}{2}(-\partial_p^2+p^2)-\sqrt{\nu}\Big(p\partial_q-q\partial_p\Big)=O_p-\sqrt{\nu}X_{\frac{\pi}{2}}\end{align*}
 is tranformed via the Bargmann transform $B_{2}$ into
$$
B_{2}(K_{\nu,\frac{\pi}{2}})^{*}B_{2}^{*}={}^{t}zM\partial_{z}\quad,\quad M=
\begin{pmatrix}
  0&-\sqrt{\nu}\\
\sqrt{\nu}&1
\end{pmatrix}\,.
$$
and
$$
[B_{2}(e^{-tK_{\nu,\frac{\pi}{2}}}u)](z)=(B_{2}u)(e^{-tM}z)\,.
$$
\end{lem}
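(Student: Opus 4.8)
The plan is to establish the two assertions of Lemma~\ref{le:3.1} separately: first the conjugation formula for the generator, and then the formula for the semigroup, the latter following from the former together with the explicit action of the Bargmann transform on composition with linear maps.

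\textbf{Step 1: Conjugation of the generator.} I would start from the well-known Bargmann-side representation of the basic operators. On the Bargmann side one has, for each coordinate $z_j$, that $B_2$ intertwines the annihilation/creation operators with $\partial_{z_j}$ and multiplication by $z_j$ (up to the normalization conventions fixed by the kernel above); concretely $B_2\, \partial_{y_j} B_2^* = \tfrac{1}{2}(z_j - 2\partial_{z_j})$-type formulas, and more usefully $B_2\, (y_j \partial_{y_k} - \text{antisymmetric combinations})\, B_2^*$ become first-order holomorphic vector fields ${}^t z\, N \partial_z$ for the appropriate matrix $N$. The cleanest route is to observe that each of the four elementary pieces appearing in $K^*_{\nu,\pi/2} = O_p - \sqrt{\nu}(p\partial_q - q\partial_p)$ is the Weyl quantization of a real quadratic form, and that the Bargmann transform conjugates such a quantization into an operator of the form $a + {}^t z\, N \partial_z$ where $N$ is read off from the Hamilton map. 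The additive constant $a$ (the ``$+1$'' trace term from $O_p$) must be tracked carefully: it accounts for the difference between $O_p = \tfrac12(D_p^2+p^2)$ and the number operator, and in the final answer it is absorbed into the lower-right entry of $M$ since ${}^t z M \partial_z$ with $M_{22}=1$ contributes exactly the $z_2\partial_{z_2}$ term matching the shifted harmonic oscillator in $p$. I would simply compute $B_2 O_p B_2^*$ and $B_2(p\partial_q - q\partial_p)B_2^*$ on, say, the monomials $z^\alpha$ (which span a dense set and are eigenfunctions of these operators), identify the resulting first-order operators, and assemble them to read off $M = \begin{pmatrix} 0 & -\sqrt{\nu} \\ \sqrt{\nu} & 1\end{pmatrix}$.

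\textbf{Step 2: From the generator to the semigroup.} Once $B_2 K^*_{\nu,\pi/2} B_2^* = {}^t z M \partial_z$ is established, the semigroup formula is essentially a statement about the flow of a linear holomorphic vector field. The vector field ${}^t z M \partial_z = \sum_{j,k} z_j M_{jk}\partial_{z_k}$ generates, on holomorphic functions, the one-parameter group $f \mapsto (z \mapsto f(e^{-tM}z))$ — this is a direct check by differentiating in $t$ and using the chain rule, noting that the exponent sign is fixed by the convention that $e^{-tK^*}$ corresponds to contraction. Since $e^{-tK_{\nu,\pi/2}}$ is the adjoint semigroup and $B_2$ is unitary, I would either transpose the identity for $K^*$ or, more cleanly, note that the roles of $K$ and $K^*$ and the sign in $M$ conspire so that $[B_2(e^{-tK_{\nu,\pi/2}}u)](z) = (B_2 u)(e^{-tM}z)$; the key point is that the same matrix $M$ (not $M^*$) appears because the Bargmann-side generator of $K^*$ is ${}^t z M \partial_z$, whose pullback flow is by $e^{-tM}$. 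One should verify that $e^{-tM}$ indeed defines a bounded composition operator on the Fock space for all $t>0$, which requires controlling the eigenvalues of $M$: here $\mathrm{tr}\, M = 1 > 0$ and $\det M = \nu$, so the eigenvalues have positive real part precisely when $1 - 4\nu < 0$ fails to produce a problematic sign — this is exactly where the hypothesis $\nu > \tfrac14$ enters, guaranteeing (complex conjugate eigenvalues with real part $\tfrac12 > 0$) that composition with $e^{-tM}$ is a contraction-type bounded operator on the Bargmann space.

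\textbf{Main obstacle.} The principal subtlety is bookkeeping of normalization constants and the additive scalar term: the precise kernel normalization in the definition of $B_2$ above (the factors $2^{2/2}\pi^{(3\times 2)/4}$ and the $e^{-(z-y)^2/2 + z^2/4}$-type exponent) determines whether $B_2 O_p B_2^*$ equals $z_2\partial_{z_2}$, $z_2\partial_{z_2} + \tfrac12$, or something else, and getting the ``$1$'' in the $(2,2)$ slot of $M$ exactly right depends on this. I would resolve it by testing on the vacuum $B_2^* \mathbf{1}$ and the first excited state, pinning down both the multiplicative and additive constants, rather than by manipulating the integral kernel directly. The flow computation in Step 2 is then routine linear ODE theory on $\mathbb{C}^2$ and the Fock-space boundedness is a standard consequence of $\mathrm{Re}\,\mathrm{spec}(M) > 0$.
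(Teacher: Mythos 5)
Your route---conjugating each piece of $K^*_{\nu,\frac{\pi}{2}}$ through $B_2$ via the intertwining relations $B_2a_q^*B_2^*=\frac{z_q}{\sqrt2}$, $B_2a_qB_2^*=\sqrt2\,\partial_{z_q}$ (and likewise in $p$), then exponentiating the resulting linear vector field and passing to the adjoint semigroup---is exactly the ``direct computation'' which the paper acknowledges is possible before choosing instead to illustrate the general Aleman--Viola scheme (spectral decomposition of the Hamilton map $H_{K^*_{\nu,\pi/2}}$, the planes $\Lambda_\pm$ giving $A_\pm=\pm i\operatorname{Id}$, the supersymmetric factorization ${}^t(D_x-A_+x)B(D_x-A_-x)$ with $M=2B$, and Egorov through the FBI phase). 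So the strategy is legitimate and genuinely different from the paper's. However, two specific claims in your outline are wrong, and one of them is precisely the obstacle you yourself single out.

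First, the additive constant cannot be ``absorbed into the lower-right entry of $M$''. Since $B_2$ is unitary, $B_2O_pB_2^*$ has the same spectrum $\{\frac12,\frac32,\dots\}$ as $O_p$, whereas $z_p\partial_{z_p}$ has spectrum $\{0,1,2,\dots\}$; your own test on the vacuum gives $B_2O_pB_2^*=z_p\partial_{z_p}+\frac12$, while the rotation term produces exactly $-\sqrt{\nu}\,(z_p\partial_{z_q}-z_q\partial_{z_p})$ with no constant. Hence the honest output of your Step~1 is the conjugation identity only up to the scalar $\frac12\operatorname{tr}M=\frac12$, and correspondingly Step~2 yields $[B_2(e^{-tK_{\nu,\frac{\pi}{2}}}u)](z)=e^{-t/2}(B_2u)(e^{-tM}z)$; this is also what the paper's Egorov step gives if one tracks the Weyl quantization of $i\,{}^tzM\zeta$. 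You must either prove this corrected version and observe that the extra scalar is harmless for the later estimates, or you have no proof of the displayed identities: the claimed absorption is not available. Second, the hypothesis $\nu>\frac14$ does not play the role you assign to it. The eigenvalues of $M$ are $\frac12\bigl(1\pm\sqrt{1-4\nu}\bigr)$, which have positive real part for every $\nu>0$; in the paper the condition enters only through $n_1=\sqrt{1-4\nu}$ being purely imaginary in the description of the eigenvectors of the Hamilton map. Moreover, boundedness of $f\mapsto f(e^{-tM}z)$ on the Fock space is not a consequence of $\operatorname{Re}\operatorname{spec}(M)>0$ for a non-normal $M$; what is needed is $\|e^{-tM}\|\le 1$, which here follows from $M+{}^tM=\operatorname{diag}(0,2)\ge 0$. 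Finally, reading ${}^tzM\partial_z=\sum_{j,k}z_jM_{jk}\partial_{z_k}$, its flow is composition with $e^{-t\,{}^tM}$, and the passage from $K^*_{\nu,\frac{\pi}{2}}$ to $K_{\nu,\frac{\pi}{2}}$ introduces a further transpose through $C_A^*=C_{A^*}$ for composition operators on the Fock space; whether $M$ or ${}^tM$ appears in the end is settled by this bookkeeping, not by the sign heuristics you invoke, so this short argument has to be written out.
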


\begin{proof}
{Although it may be proved by a direct computation,  it is instructive
as an illustration of the general method to follow the lines of
\cite{AlVi} or \cite{Vio}, Example 2.7. Remember that it is made in essentially two
steps~: 1) Write the operator, up to an additive constant, in the ``supersymmetric'' form
${}^{t}(D_{x}-A_{+}x)B(D_{x}-A_{+}x)$ after some real canonical
transformation in $\mathbb{R}^{2d}$ (here $d=2$); 2) transform the
supersymmetric form into $i{}^{t}zM\zeta$ after 
some linear complex canonical transformation
associated with an FBI-transform.\\
\textbf{Step~1:}
The two variables $(q,p)$ are
gathered in  the notation $x=(q,p)\in
\mathbb{R}^{2}$\,, with dual variable $\xi=(\xi_{q},\xi_{p})\in
\mathbb{R}^{2}$\,.}
The hamiltonian matrix associated to $K^*_{\nu,\frac{\pi}{2}}$ is given by 
 
 $$H_{K^*_{\nu,\frac{\pi}{2}}}= \begin{pmatrix}
 0&-i\sqrt{\nu}&0&0\\i\sqrt{\nu}&0&0&1\\0&0&0&-i\sqrt{\nu}\\0&-1&i\sqrt{\nu}&0
\end{pmatrix}~. $$ 

Set $\lambda_{\epsilon_1,\epsilon_2}=\frac{\epsilon_1 i+\epsilon_2 in_1}{2}$ the eigenvalues of $H_{K^*_{\nu,\frac{\pi}{2}}}$ with their associated eigenvectors $$^tX_{\epsilon_1,\epsilon_2}=\Big(1,\frac{i\lambda_{\epsilon_1,\epsilon_2}}{\sqrt{\nu} },\frac{(\lambda_{\epsilon_1,\epsilon_2})^2-\nu}{\lambda_{\epsilon_1,\epsilon_2}},i\frac{(\lambda_{\epsilon_1,\epsilon_2})^2-\nu}{\sqrt{\nu}}\Big)~,$$
where $\epsilon_1,\epsilon_2 \in \{\pm 1\}.$ In the case $\alpha=\frac{\pi}{2}$, one has $n_1=\sqrt{1-4\nu}=i\sqrt{4\nu-1}$ for $\nu>\frac{1}{4}$\,.

As a first step we need to determine the following two spaces:
\[
\Lambda_{-}=\underset{\textrm{Im}\;\lambda<0}\bigoplus \ker(H_{K^*_{\nu,\frac{\pi}{2}}}-\lambda I)=\left\{\begin{pmatrix}
x\\A_{-}x\end{pmatrix},\;\;\;x\in\mathbb{C}^2\right\}
\]
and
\[
\Lambda_{+}=\underset{\textrm{Im}\;\lambda>0}\bigoplus \ker(H_{K^*_{\nu,\frac{\pi}{2}}}-\lambda I)=\left\{\begin{pmatrix}
x\\A_{+}x\end{pmatrix},\;\;\;x\in\mathbb{C}^2\right\},
\]
where $A_{+}$ and $A_{-}$ are two matrices in $\mathbb{M}_2(\mathbb{C})$ satisfying ${}^tA_{\pm}=A_{\pm}$ and $\pm \textrm{Im}(A_{\pm})>0$. 

The  matrix $A_{+}$ is  given by $A_{+}=B_{1+}^{-1}B_{2+}$ where $$ B_{1+}=\begin{pmatrix}
1&\frac{-1-n_1}{2\sqrt{\nu}}\\\\1&\frac{-1+n_1}{2\sqrt{\nu}}
\end{pmatrix}\;\;\;\;\text{and}\;\;\;B_{2+}=\begin{pmatrix}
i&\frac{-i-in_1}{2\sqrt{\nu}}\\\\i&\frac{-i+in_1}{2\sqrt{\nu}}
\end{pmatrix}~,$$
so $A_{+}=i\text{Id}~.$ Similarly, $A_{-}=B_{1-}^{-1}B_{2-}$ with $$ B_{1-}=\begin{pmatrix}
1&\frac{1+n_1}{2\sqrt{\nu}}\\\\1&\frac{1-n_1}{2\sqrt{\nu}}
\end{pmatrix}\;\;\;\;\text{and}\;\;\;B_{2-}=\begin{pmatrix}
-i&\frac{-i+in_1}{2\sqrt{\nu}}\\\\-i&\frac{-i-in_1}{2\sqrt{\nu}}
\end{pmatrix}~,$$
so $A_{-}=-i\text{Id}$\,. This means, after
\cite{Vio}~formula~(2.3), that the real canonical transformation on
$\mathbb{R}^{4}$ is nothing but the identity.

Hence it suffices to write $K^*_{\nu,\frac{\pi}{2}}$ in
the form 
$$
K^*_{\nu,\frac{\pi}{2}}=\;^t(D_x-A_{+}x)B(D_x-A_{-}x)~,
$$ for all $x=(q,p)\in \mathbb{R}^2$\,, where the matrix $B$
is found by identification of the two sides:
  $$B=\begin{pmatrix}
0&\frac{-\sqrt{\nu}}{2}\\\frac{\sqrt{\nu}}{2}&\frac{1}{2}
\end{pmatrix}{~.}$$
\textbf{Step~2:} Once $A_{+}$ and 
$A_{-}$ are known, the complex canonical transformation is given by
$$\kappa=\begin{pmatrix}
1&-i\\-(1-iA_+)^{-1}A_{+}&(1-iA_+)^{-1}
\end{pmatrix}{~,} $$
with associated quadratic 
phase $\varphi_{A_{+}}:\mathbb{C}^2\times \mathbb{C}^2\to
\mathbb{C}$
$$
\varphi_{A_+}(x,y)=\frac{i(x-y)^2}{2}-\frac{1}{2}\Big(x,(1-iA_+)^{-1}A_+x\Big)=i\left[\frac{(x-y)^{2}}{2}-\frac{x^{2}}{4}\right]\,,
$$
which is the one entering in the definition of the associated FBI
transform (which is $B_{2}$)\,.
The computation of $B_{2}K^*_{\nu,\frac{\pi}{2}}B_{2}^{*}$ then comes
from Egorov's theorem
\begin{eqnarray*}
  &&K^*_{\nu,\frac{\pi}{2}}(\kappa^{-1}Z)=^tZ^t\kappa^{-1}\begin{pmatrix}
-A_{+}\\\text{Id}
\end{pmatrix}B(-A_-, \text{Id})\kappa^{-1}Z
=i^tzM\zeta\\
\text{with}&&
M=(1-iA_+)B=2B=\begin{pmatrix}
0&-\sqrt{\nu}\\\sqrt{\nu}&1
\end{pmatrix}\;.
\end{eqnarray*}
The weight $e^{-2\phi(z)}L(dz)$ occuring in
the range of $B_{2}$ is $\phi(z)=\frac{|z|^{2}}{4}$ which is coherent
 with the formulas (2.6) and (2.7) of \cite{Vio}\,,
$\phi(x)=\frac{1}{4}\Big(|x|^2-\;^txCx\Big)$ because $C=(1-iA_+)^{-1}(1+iA_+)=0\,.$
\end{proof}
\begin{lem}\label{lem3.3} There exists a constant $c>0$ independent of  $\nu>1$, such
that
for all $t>0$ and all $u\in L^{2}(\mathbb{R}^{2})$\,,
$u_{t}=e^{-t(K_{\nu,\frac{\pi}{2}}+\nu^{1/3})}u$ satisfies
\begin{equation}
\frac{\nu}{2}\left(\|u_{t}\|_{L^{2}(\mathbb{R}^{2})}^{2}
+\|D_{q}u_{t}\|_{L^{2}(\mathbb{R}^{2})}^{2}+\|qu_{t}\|_{L^{2}(\mathbb{R}^{2})}^{2}
\right)=
\|\sqrt{\nu}(\frac{-\partial_q+q}{\sqrt{2}})e^{-t(K_{\nu,\frac{\pi}{2}}+\nu^{\frac{1}{3}})}u\|_{L^2(\mathbb{R}^{2})}^{2}
\le \frac{c}{t^{3}}\|u\|_{L^{2}(\mathbb{R}^{2})}^{2}\,.
\end{equation}
\end{lem}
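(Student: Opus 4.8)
The plan is to transport the whole estimate to the Bargmann side using Lemma~\ref{le:3.1} and to reduce it to an explicit Gaussian computation on the Fock space $\mathcal{F}=L^{2}(\mathbb{C}^{2},e^{-|z|^{2}/2}L(dz))\cap\mathrm{Hol}(\mathbb{C}^{2})$. The first equality in the statement is only the canonical commutation relation: for $v\in L^{2}(\mathbb{R}^{2})$ one expands $\|\tfrac{-\partial_{q}+q}{\sqrt2}v\|^{2}=\tfrac12(\|qv\|^{2}+\|\partial_{q}v\|^{2})-\mathrm{Re}\langle\partial_{q}v,qv\rangle$, and an integration by parts gives $\mathrm{Re}\langle\partial_{q}v,qv\rangle=-\tfrac12\|v\|^{2}$, whence $\|\tfrac{-\partial_{q}+q}{\sqrt2}v\|^{2}=\tfrac12(\|v\|^{2}+\|D_{q}v\|^{2}+\|qv\|^{2})$; taking $v=u_{t}$ and multiplying by $\nu$ yields the claimed identity, so it remains to bound $\nu\|\tfrac{-\partial_{q}+q}{\sqrt2}u_{t}\|^{2}$.

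First I would conjugate by $B_{2}$. The operator $\tfrac{-\partial_{q}+q}{\sqrt2}$ is the creation operator in the first variable $x_{1}=q$, so (the FBI transform $B_{2}$ being ``flat'', i.e. $C=0$ in the notation of the proof of Lemma~\ref{le:3.1}) it becomes multiplication by $z_{1}$ on $\mathcal{F}$; and by the semigroup identity of Lemma~\ref{le:3.1}, $B_{2}u_{t}(z)=e^{-t\nu^{1/3}}(B_{2}u)(e^{-tM}z)$ with $M=\begin{pmatrix}0&-\sqrt\nu\\\sqrt\nu&1\end{pmatrix}$. Setting $v_{0}=B_{2}u$ (so $\|v_{0}\|_{\mathcal F}=\|u\|_{L^{2}}$), the quantity to control is
\[
\nu\,e^{-2t\nu^{1/3}}\int_{\mathbb{C}^{2}}|z_{1}|^{2}\,|v_{0}(e^{-tM}z)|^{2}\,e^{-|z|^{2}/2}\,L(dz).
\]
Since the eigenvalues of $M$ are $\tfrac{1\pm i\sqrt{4\nu-1}}{2}$, of modulus $\sqrt\nu$ and real part $\tfrac12$, $e^{-tM}$ is a contraction, and the linear change of variables $w=e^{-tM}z$ (with complex Jacobian $|\det e^{tM}|^{2}=e^{2t\operatorname{tr}M}=e^{2t}$) rewrites the integral as $\int|(e^{tM}w)_{1}|^{2}e^{-\frac12\langle G_{t}w,w\rangle}|v_{0}(w)|^{2}e^{-|w|^{2}/2}\,L(dw)$ with $G_{t}:=e^{tM^{*}}e^{tM}-I\ge0$. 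Estimating $|(e^{tM}w)_{1}|^{2}\le\|e^{tM}\|^{2}|w|^{2}$ and $\langle G_{t}w,w\rangle\ge\lambda_{\min}(G_{t})|w|^{2}$ leaves the bound $\tfrac{2\|e^{tM}\|^{2}}{e\,\lambda_{\min}(G_{t})}\|v_{0}\|^{2}$, so the proof reduces to: (i) $\|e^{tM}\|^{2}\le Ce^{t}$ uniformly for $\nu\ge1$, which follows from $e^{tM}=e^{t/2}e^{t(M-\frac12 I)}$ and the fact that $M-\tfrac12 I$ has purely imaginary spectrum, so $e^{t(M-\frac12 I)}$ is conjugate to a rotation of uniformly bounded norm; and (ii) a sufficiently strong lower bound on $\lambda_{\min}(G_{t})$.

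Point (ii) is the analogue, for $V=+\tfrac{\nu q^{2}}{2}$, of the positivity width $\delta_{0}(t)$ from Lemma~\ref{le:delta0} and Proposition~\ref{prop3.1}. Writing $\langle G_{t}w,w\rangle=\int_{0}^{t}2|(e^{sM}w)_{2}|^{2}\,ds$ (from $\tfrac{d}{ds}|e^{sM}w|^{2}=2\,\mathrm{Re}\langle Me^{sM}w,e^{sM}w\rangle=2|(e^{sM}w)_{2}|^{2}$) shows at once that $t\mapsto\lambda_{\min}(G_{t})$ is nondecreasing, and a Taylor expansion together with $Me_{1}=\sqrt\nu\,e_{2}$ (the ``bracket'' of size $\sqrt\nu$, which becomes a factor $\nu$ after the time integration weights it by $s^{2}$) gives $\lambda_{\min}(G_{t})\ge c\,\nu t^{3}$ for $0<t\le t_{0}:=\frac{\epsilon_{0}}{1+\sqrt\nu}$ — exactly the $\nu t^{3}$ scaling of $\delta_{0}(t)$ — while for large $t$ both singular values of $e^{tM}$ are comparable to $e^{t/2}$, so $\lambda_{\min}(G_{t})\gtrsim e^{t}$. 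For $0<t\le t_{0}$ the factor $\nu$ then cancels $\lambda_{\min}(G_{t})^{-1}\sim(\nu t^{3})^{-1}$, the surviving exponential stays bounded (using $\nu^{1/3}\ge1$, and the accretivity $\mathrm{Re}\langle K_{\nu,\frac{\pi}{2}}u,u\rangle\ge\tfrac12\|u\|^{2}$ — valid since $p\partial_{q}-q\partial_{p}$ is skew-adjoint and $O_{p}\ge\tfrac12$ — even provides a contracting factor), and one gets the desired $\tfrac{c}{t^{3}}\|u\|^{2}$. For $t\ge t_{0}$ one propagates the bound obtained at $t_{0}$ by $\|e^{-s(K_{\nu,\frac{\pi}{2}}+\nu^{1/3})}\|\le e^{-s(\frac12+\nu^{1/3})}$ together with the growth $\lambda_{\min}(G_{t})\gtrsim e^{t}$. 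I expect the genuinely delicate step to be precisely this second regime: closing the estimate with a constant independent of $\nu$ through the transition $t\sim\nu^{-1/2}$, where the polynomial loss $\lambda_{\min}(G_{t_{0}})^{-1}\sim\sqrt\nu$ must be absorbed — which is exactly why the subelliptic gain in the statement is taken to be the power $\nu^{1/3}$.
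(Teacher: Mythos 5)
Your first half is fine and close in spirit to the paper: the commutation identity, the Bargmann conjugation $B_2a_q^*B_2^*=\frac{z_q}{\sqrt 2}$, the reduction to a Gaussian supremum, and the short-time bound $\lambda_{\min}(G_t)\ge c\,\nu t^3$ for $t\le t_0\sim(1+\sqrt\nu)^{-1}$ (your Gram-type formula $\langle G_tw,w\rangle=\int_0^t 2|(e^{sM}w)_2|^2ds$ is an acceptable substitute for the paper's explicit quaternionic computation of $(e^{tM})^*e^{tM}$, whose eigenvalues are $e^{t\pm2\operatorname{argsh}|S(t)|}$). Two remarks: you drop the Jacobian $e^{2t\operatorname{Tr}M}=e^{2t}$ in the change of variables, and the paper works with the adjoint $\|e^{-tK^*_{\nu,\pi/2}}a_q^*f\|$, so that the multiplication by $z_q$ acts \emph{before} the composition operator; this second point is not cosmetic, because it is what allows the paper to compute the supremum exactly by a $Q_t$-conjugate basis (a Schur complement in the $z_q$-direction), $\sup_z|z_q|^2e^{-Q_t(z)/2}=\frac{2e^{-1}}{Q_t(e_q')}$ with $Q_t(e_q')=\frac{4(\sh^2(t/2)-S^2(t))}{(1-e^{-t})+2S^2(t)+2S(t)C(t)}$, rather than resorting to $\|e^{tM}\|^2/\lambda_{\min}(G_t)$.

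The genuine gap is exactly the regime you flag and leave open, $t\ge t_0$, and neither of your two mechanisms can close it. For $\nu^{-1/2}\lesssim t\lesssim 1$ one has $|S(t)|\le 1/r_1=O(\nu^{-1/2})$, hence $\lambda_{\min}(G_t)=e^{t-2\operatorname{argsh}|S(t)|}-1\asymp t$: no factor $\nu$ survives. Thus at $t=\nu^{-1/3}$ your in-place bound gives $\nu e^{-2t\nu^{1/3}}e^{2t}\|e^{tM}\|^2\frac{2}{e\lambda_{\min}(G_t)}\asymp \nu\cdot e^{-2}\cdot\nu^{1/3}=\nu^{4/3}$, while the target is $c/t^3=c\,\nu$; and propagating from $t_0$ gives $Ct_0^{-3}e^{-2(t-t_0)(\frac12+\nu^{1/3})}\asymp\nu^{3/2}e^{-2t\nu^{1/3}}\asymp\nu^{3/2}$ at the same time, again above $c\,\nu$. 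So on the window $\nu^{-1/2}\lesssim t\lesssim\nu^{-1/3}\log\nu$ both of your estimates lose a positive power of $\nu$, and no adjustment of constants repairs this; note this window is precisely the content that distinguishes Lemma~\ref{lem3.3} (shift $\nu^{1/3}$) from Proposition~\ref{prop3.1} (shift $\sqrt\nu$), where your propagation scheme is indeed the one used. The paper's proof treats this regime with the exact quantity $Q_t(e_q')$: for $t\ge 4/r_1$ it bounds the numerator by $2$ and the denominator from below by $\sh^2(t/2)$, and then converts $\sqrt\nu e^{-t\nu^{1/3}}$ into $t^{-3/2}(\nu^{1/3}t)^{3/2}e^{-\nu^{1/3}t}$. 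Be aware, however, that this intermediate window is delicate even there: the step $\frac{2}{\sh^2(t/2)}\le 2e^{-t}$ used in the paper holds only for $\ch t\ge 3$, and for $4/r_1\le t\lesssim 1$ one actually has $Q_t(e_q')\asymp t$, so the decay claimed there does not follow from eigenvalue or norm bounds of the type you use; any completion of your argument must confront the prefactor $\nu e^{-2t\nu^{1/3}}$ against $1/Q_t(e_q')\asymp 1/t$ head-on in that window, which is where all the tension of the lemma sits.
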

\begin{proof}
Set $a_{q}=\frac{\partial_{q}+q}{\sqrt{2}}$ and
$a_{q}^{*}=\frac{-\partial_{q}+q}{\sqrt{2}}$ so that
$
a_{q}a_{q}^{*}=a_{q}^{*}a_{q}+1=\frac{1}{2}(D_{q}^{2}+q^{2}+1)$\,.
The identity
\begin{multline*}
\nu\|a_{q}^{*}e^{-t(K_{\nu,\frac{\pi}{2}}+\nu^{1/3})}u\|_{L^{2}(\mathbb{R}^{2})}^{2}
=\nu\|e^{-t(K_{\nu,\frac{\pi}{2}}+\nu^{1/3})}u\|_{L^{2}(\mathbb{R}^{2})}^{2}+\nu\|a_{q}e^{-t(K_{\nu,\frac{\pi}{2}}+\nu^{1/3})}u\|_{L^{2}}^{2}
\\ \leq \nu e^{-t\nu^{1/3}}\|u\|_{L^{2}(\mathbb{R}^{2})}^{2}+
\nu\|a_{q}e^{-t(K_{\nu,\frac{\pi}{2}}+\nu^{1/3})}u\|_{L^{2}}^{2}
\end{multline*}
reduces the problem to that of estimating
$\|\sqrt{\nu}a_{q}e^{-t(K_{\nu,\frac{\pi}{2}}+\nu^{1/3
})}\|.$ By taking the adjoint, it suffices to prove that
\begin{align}
\|\sqrt{\nu}e^{-t(K^*_{\nu,\frac{\pi}{2}}+\nu^{1/3})}a_q^*f\|_{L^2(\mathbb{R}^2)}\le \frac{c}{ t^{\frac{3}{2}}}\|f\|_{L^2(\mathbb{R}^2)}\label{eq7}
\end{align}
is satisfied for all $f\in L^{2}(\mathbb{R}^{2},dqdp)$ and for all $t>0$\,.

Conjugating by the Bargmann transform $B_{2}$, the creation operator
$B_2a_q^*B_2^*=B_2(\frac{-\partial_q+q}{\sqrt{2}})B_2^*=\frac{z_{q}}{\sqrt{2}}\times$
is nothing but multiplication by the complex component $z_{q}$ in $\mathbb{C}^{2}=\mathbb{C}_{q}\times\mathbb{C}_{p}$\,. The inequality 
(\ref{eq7}) is therefore equivalent to 
\begin{align}
\|\sqrt{\nu}e^{-t(Mz\partial_z+\nu^{1/3})}z_qu\|_{H_{\phi}}\le \frac{c}{ t^{\frac{3}{2}}}\|u\|_{H_{\phi}} 
\end{align}
for all $u\in H_{\phi}=L^{2}(\mathbb{C}^{2},e^{-	\frac{|z|^{2
    }}{2}}L(dz))\cap \text{Hol}(\mathbb{C}^{2})$\,, with $\phi(z)=\frac{|z|^{2}}{4}$\,.

Let $u\in H_{\phi}$~, setting $v(z)=z_qu(z)$, one has  $e^{-tMz\partial_z}v(z)=v(e^{-tM}z)$ and it follows that
\begin{align*}
\|e^{-tMz\partial_z}z_qu\|^2_{H_{\phi}}&=\int_{\mathbb{C}^2}|v(e^{-tM}z)|^2|(e^{-tM}z)_{q}|^{2}e^{-2\phi(z)}L(dz)\\&=e^{2t\operatorname{Tr} M}\int_{\mathbb{C}^2}|v(z')|^2|z'_q|^2e^{-\phi(z')}e^{-2[\phi(e^{tM}z')-\phi(z')]}L(dz'){~.}\end{align*}
So our problem is reduced to the proof of the existence of a constant $c>0$ that does not depend on  $\nu$ such that  \begin{align*}
\sup\limits_{z\in\mathbb{C}^2}|z_q|^2e^{-\frac{1}{2}\Big(|e^{tM}z|^2-|z|^2\Big)}e^{-t\nu^{1/3}}\le \frac{c}{\nu t^3}
\end{align*}
for all $t>0$.

Let us start by checking that  $z\mapsto \phi(e^{tM}z)-\phi(z)$
defines a positive definite hermitian form for $t>0$\,.

From the expression given in Lemma~\ref{le:3.1}\,,  $M$ is
easily written in terms of Pauli's matrices:
\begin{eqnarray*}
    &&M=\frac{1}{2}\text{Id} -\frac12\sigma_3-i\sqrt{\nu}\sigma_2~,\\
\text{with}&&
\sigma_1=\begin{pmatrix}
0&1\\1&0
\end{pmatrix}~;~\sigma_2=\begin{pmatrix}
0&-i\\i&0
\end{pmatrix}\;\;\;\text{and}\;\;\;\sigma_3=\begin{pmatrix}
1&0\\0&-1
\end{pmatrix}~.
  \end{eqnarray*}
Recall that Pauli's matrices are involutory:
$$
\sigma_1^2 = \sigma_2^2 = \sigma_3^2 = -i \sigma_1 \sigma_2 \sigma_3 = \text{Id}~\,,$$
and  that $(\text{Id},-i\sigma_1,-i\sigma_2,-i\sigma_3)$ can be interpreted as a
basis  of (bi)quaternions.

Using formula (\ref{formula1}), one has for all $t>0$

$$e^{tM}=e^{\frac{t}{2}}\Big(C(t)+2S(t)(-\frac12 \sigma_3-i\sqrt{\nu}\sigma_2)\Big)~.$$
From this, we compute 
\begin{align*}
(e^{tM})^*e^{tM}&=e^{t}\Big(1+2S^2(t)-2C(t)S(t)\sigma_3-4\sqrt{\nu}S^2(t)\sigma_1\Big)\\&=e^{t}(a+v)~,
\end{align*}
with $a=1+2S^2(t)$ and $v=-2C(t)S(t)\sigma_3-4\sqrt{\nu}s_1^2(t)\sigma_1~$.

The eigenvalues of $(e^{tM})^*e^{tM}$ are given by
$$\lambda_{\pm}=e^{t}(a\pm \sqrt{-N(v)})~,$$
where
$N(v)=-\Big(2C(t)S(t)\Big)^2-\Big(4\sqrt{\nu}S^2(t)\Big)^2=-4S^{2}-4S^{4}<0$
owing to $(4\nu-1)S^{2}+C^{2}=1$\,,
and where $\sqrt{-N(v)}$ is the usual square root.

In order to prove that the hermitian form
$z\mapsto\phi(e^{tM}z)-\phi(z)=\;^t\bar{z}\Big((e^{tM})^*(e^{tM})-\text{Id}\Big)z$
is positive definite, it suffices to 
check  $\lambda_{-}>1$ for all $t>0$, $\lambda_+$ being clearly
strictly larger than $1$\,.
The eigenvalue $\lambda_{-}$ equals
$$
\lambda_{-}=e^{t}(1+(1+S^{2})-2|S|\sqrt{1+S^{2}})
=e^{t}e^{-2\operatorname{Argsh} |S|}
$$
which is larger than $1$ if and only if $\sh(t/2)-|S|>0$ or
$[\sh(t/2)-S(t)][\sh(t/2)+S(t)]>0$ because $\sh(t/2)>0$\,.
This is true since both factors vanish at $t=0$ with a positive
derivative  for $t>0$
owing to $\ch(t/2)>1>\pm\cos(\frac{t}{2}\sqrt{4\nu-1})$\,.\\

Now denote \begin{align*}r_1=\sqrt{4\nu-1}~;\;\;\;Q_t(z)=\;^t\bar{z}\Big[(e^{tM})^*(e^{tM})-\text{Id}\Big]z\;\;\;\text{ and }\;\;\;S_t(z_1,z_2)=\;^t\bar{z_1}\Big[(e^{tM})^*(e^{tM})-\text{Id}\Big]z_2
\end{align*}
for all $z=(z_1,z_2)\in \mathbb{C}\times\mathbb{C}$. Writing $z_q=l(z)$ where $l$ is a linear form with kernel $ker\;l=\mathbb{C}e_p$, $\Big(\mathbb{C}^2=\mathbb{C}e_q\oplus \mathbb{C}e_p$ where $e_q=(1,0)$ and $e_p=(0,1)\Big)$, we construct an orthonormal basis $(e_q',e_p)$ for $Q_t$ with \begin{align*}e_q'=e_q-\frac{S_t(e_p,e_q)}{S_t(e_p,e_p)}e_p=\begin{pmatrix}
1\\\\\ \frac{4\sqrt{\nu}\;S^2(t)}{(1-e^{-t})+2S^2(t)+2S(t)C(t)}
\end{pmatrix}\;,
\end{align*} 
where 
$$\left\{
\begin{array}{l}S_t(e_p,e_q)=-4\sqrt{\nu}\;e^t \;S^2(t)\\\\ S_t(e_p,e_p)=e^t\Big[(1-e^{-t})+2 S^2(t)+2S(t)C(t)\Big]\;.\end{array}\right.
$$
 
In this new basis, 
$z=\alpha e'_q+\beta e_p$ then $l(z)=\alpha\;l(e'_q)$ and $Q_t(z)=|\alpha|^2Q_t(e'_q)+|\beta|^2Q_t(e_p)$. This gives immediately 
\begin{align*}
|z_q|^2e^{\frac{-Q_t(z)}{2}}=|\alpha|^2|l(e'_q)|^2e^{\frac{-|\alpha|^2Q_t(e'_q)-|\beta|^2Q_t(e_p)}{2}}.
\end{align*}
and then 
\begin{align*}
\sup\limits_{z\in\mathbb{C}^2}|z_q|^2e^{-\frac{1}{2}\Big(|e^{tM}z|^2-|z|^2\Big)}=\sup\limits_{s\in\mathbb{R}_{+}}|l(e'_q)|^2e^{-\frac{s Q_t(e'_q)}{2}}=\frac{2|l(e'_q)|^2}{Q_t(e'_q)}\sup\limits_{\sigma\in \mathbb{R}_{+}}\sigma  e^{-\sigma}=c_0\frac{2|l(e'_q)|^2}{Q_t(e'_q)}=c_0\frac{2}{Q_t(e'_q)}
\end{align*}
where $c_0=\sup\limits_{\sigma\in \mathbb{R}_{+}}\sigma e^{-\sigma}$ and 

\begin{align*}
Q_t(e'_q)&=S_t(e'_q,e'_q)=\frac{4\Big(\sh^2(\frac{t}{2}) -S^2(t)\Big)}{(1-e^{-t})+2S^2(t)+2S(t)C(t)}~.
\end{align*}
Recall that, in the case $\alpha=\frac{\pi}{2}$ and for $\nu>\frac{1}{4}$, we define $C(t)=\cos(\frac{tr_1}{2})$ and $S(t)=\frac{\sin(\frac{tr_1}{2})}{r_1}$.

All that remains is to control the following quotient for all $t> 0$:
\begin{align*}\frac{1}{Q_t(e'_q)}=\frac{(1-e^{-t})+2S(t)\Big(S(t)+C(t)\Big)}{4\Big[\sh^2(\frac{t}{2})-S^2(t)\Big]}:=\frac{N}{D}~.
\end{align*}
$\bullet$ Starting with the case when  $t\ge \frac{4}{r_1}$,
\begin{align*}N=(1-e^{-t})+2S(t)\Big(S(t)+C(t)\Big)\le 1+\frac{4}{r_1}\le 2~.
\end{align*}
On the other hand, \begin{align*}|&S(t)|\le \frac{1}{r_1}\le \frac{t}{4}\le \frac{1}{2}\sh(\frac{t}{2})\;\;\;\;\; \text{implies}\;\;\;\; D\ge \sh^2(\frac{t}{2})~.\end{align*}
Then \begin{align*}
\frac{1}{Q_t(e'_q)}\le \frac{2}{\sh^2(\frac{t}{2})}\le 2e^{-t}
\end{align*}
for all $ t\ge \frac{4}{r_1}$.
\\ 
$\bullet$ Now observe that for $t\le \frac{4}{r_1}$, one has the following two expansions:
\begin{align*}\sh(\frac{t}{2})+S(t)=\sum_{k=0}^{+\infty}(-1)^k(r_1^{2k}+(-1)^k)\frac{t^{2k+1}}{2^{2k+1}(2k+1)!}\end{align*}
and
\begin{align*}
\sh(\frac{t}{2})-S(t)=\sum_{k=0}^{+\infty}(-1)^k(-r_1^{2k}+(-1)^k)\frac{t^{2k+1}}{2^{2k+1}(2k+1)!}{~.}
\end{align*} 
Furthermore, 
\begin{align*}\left|\sh(\frac{t}{2})+S(t)-t-\frac{r_1^2-1}{48}t^3\right|\le \frac{(r_1^4+1)t^5}{2^5\times120}
\end{align*}
which implies
 \begin{align}\frac{1}{t}\Big(\sh(\frac{t}{2})+S(t)\Big)&\ge 1-\frac{(r_1^2-1)}{48}t^2-\frac{r_1^4+1}{2^5\times 120}t^4 \nonumber\\&\ge 1-\frac{16}{48}-\frac{2\times 4^4}{2^5\times 120}\ge 1-\frac{1}{3}-\frac{2}{15}=\frac{8}{15}~.\label{eq14}
 \end{align}
 
Similarly,
 \begin{align*}
 \Big| \sh(\frac{t}{2})-S(t)-\frac{r_1^2+1}{48}t^3\Big|&\le \frac{(r_1^4-1)t^5}{2^5\times 120}=\frac{(r_1^2+1)t^3}{48}\times \frac{(r_1^2-1)t^3}{4\times 20}\\&\le \frac{(r_1^2+1)t^3}{48}\frac{(r_1t)^2}{4\times 20}\\&\le \frac{(r_1^2+1)t^3}{48}\frac{1}{5}~,\end{align*}
which gives
 \begin{align}\;\;\;\;\;\;\;\;\;\;\;\;\;\;\;\; \sh(\frac{t}{2})-S(t)\ge \frac{(r_1^2+1)t^3}{48}\times \frac{4}{5}~.\label{eq15}
 \end{align}
Taking into account (\ref{eq14}) and (\ref{eq15}) we get
\begin{align*}D\ge \Big(\sh(\frac{t}{2})+S(t)\Big)\Big(\sh(\frac{t}{2})-S(t)\Big)\ge t\times\frac{8}{15}\times \frac{(r_1^2+1)t^3}{48}\times \frac{4}{5}{~.}\end{align*}
On the other hand,
\begin{align*}
N=(1-e^{-t})+2S(t)\Big(S(t)+C(t)\Big)&=2t+(1-\frac{r_1^2}{6})t^3-\frac{1+r_1^2}{24}t^4+\mathcal{O}(r_1^4t^5)\\&=t\Big(2+(1-\frac{r_1^2}{6})t^2-\frac{1+r_1^2}{24}t^3+\mathcal{O}((r_1t)^4)\Big)~.
\end{align*}
Hence $ N\le ct $ for all  $t\le \frac{4}{r_1}$
and  \begin{align*}\frac{1}{Q_t(e'_q)}=\frac{N}{D}\le \frac{c}{\nu t^3}\;\;\;\text{for all}\;\;\;t\le \frac{4}{r_1}{~.}
\end{align*}
Thus there exists a constant $c>0$ such that, for all $u\in H_{\phi}$,
\begin{align*}\displaystyle \|e^{-tMz\partial_z}z_qu\|^2_{H_{\phi}}\le  \left\{
    \begin{array}{ll}
      \frac{c}{\nu t^3}\|u\|^2_{H_{\phi}} \;\;\;\;\;\text{for all }\;\;t\le \frac{4}{r_1}\\ce^{-t}\|u\|^2_{H_{\phi}} \;\;\;\;\;\text{ for all}\;\;t\ge \frac{4}{r_1}
    \end{array}
\right.
\end{align*}
which is equivalent to
\begin{align*}\displaystyle \|e^{-tK^*_{\nu,\frac{\pi}{2}}}a^*_qv\|_{L^2}\le  \left\{
    \begin{array}{ll}
      \frac{c}{\sqrt{\nu t^3}}\|v\|_{L^2} \;\;\;\;\;\text{ for all }\;\;t\le \frac{4}{r_1}\\ce^{-t}\|v\|_{L^2} \;\;\;\;\;\text{ for all }\;\;t\ge \frac{4}{r_1}
    \end{array}
\right.
\end{align*}
for all $v\in D(K_{\nu,\frac{\pi}{2}})$.

From this, we deduce that

\begin{align*}\displaystyle \|a_qe^{-t(\nu^{\frac{1}{3}}+K_{\nu,\frac{\pi}{2}})}v\|_{L^2}\le  \left\{
    \begin{array}{ll}
      \frac{c}{\sqrt{\nu t^3}}\|v\|_{L^2} \;\;\;\;\;\text{if }\;\;t\le \frac{4}{r_1}\\ce^{-\nu^{\frac{1}{3}}t}\|v\|_{L^2} \;\;\;\;\;\text{if }\;\;t\ge \frac{4}{r_1}
    \end{array}
\right.\end{align*}
for every $v\in D(K_{\nu,\frac{\pi}{2}})$. When $0 < t \leq \frac{4}{r_1}$, we clearly have
\[
	\|\sqrt{\nu} a_q e^{-t(\nu^{\frac{1}{3}} + K_{\nu, \frac{\pi}{2}})}\|_{\mathcal{L}(L^2(\mathbb{R}^2))} \leq \frac{C}{t^{\frac{3}{2}}}~.
\]
When $t \geq \frac{4}{r_1}$, we obtain the same result by writing
\[
	c\sqrt{\nu}e^{-\nu^{\frac{1}{3}} t} = \frac{c}{t^{\frac{3}{2}}}\left(\nu^{\frac{1}{3}} t\right)^{\frac{3}{2}}e^{-\nu^{\frac{1}{3}} t}
\]
and noting that the function $s^{3/2}e^{-s}$ is bounded on $[0, \infty)$. This establishes the inequality for all $t > 0$ and completes the proof of the lemma.
\end{proof}

\section{Resolvent estimates when $V(q)=-\frac{\nu q^2}{2}, \nu \gg 1$ }

In this section, we use the same notations as in the previous one and we take $\alpha=0$. Giving the exact norm of the semigroup $e^{-tK_{\nu,0}}$ allows us to control the resolvent of the operator $K_{\nu,0}$. When doing so, a logarithmic factor appears, with optimality up to an exponent.   
\begin{lem}
For every $t\ge0$, one has
 $$\|e^{-tK_{\nu,0}}\|_{\mathcal{L}(L^2(\mathbb{R}^2))}=e^{-\operatorname{Argsh}\Big(S(t)\Big)}$$
where \begin{align*}
S(t)=\frac{\sh(\frac{tn_1}{2})}{n_1}=\frac{\sh(\frac{t\sqrt{4\nu+1}}{2})}{\sqrt{4\nu+1}}~.
\end{align*}

\end{lem}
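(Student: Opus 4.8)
The plan is to follow, with $\alpha=0$, the FBI/Bargmann scheme already used in Lemma~\ref{le:3.1} and Lemma~\ref{lem3.3}, but pushed far enough to extract the \emph{exact} operator norm and not merely an upper bound. Since $\|e^{-tK_{\nu,0}}\|=\|e^{-tK^{*}_{\nu,0}}\|$, I would first conjugate $K^{*}_{\nu,0}$ by an FBI transform $\mathcal B$ exactly as in Lemma~\ref{le:3.1}: diagonalize $H_{K^{*}_{\nu,0}}$, split $\mathbb{C}^{4}=\Lambda_{+}\oplus\Lambda_{-}$, read off the symmetric matrices $A_{\pm}$ with $\pm\operatorname{Im}A_{\pm}>0$, and obtain a unitary $\mathcal B:L^{2}(\mathbb{R}^{2})\to H_{\phi_{0}}$, $\phi_{0}(z)=\tfrac14(|z|^{2}-\operatorname{Re}({}^{t}zC_{0}z))$, conjugating $e^{-tK^{*}_{\nu,0}}$ into a weighted composition operator $u\mapsto k_{t}(z)\,u(e^{-tM_{0}}z)$ on $H_{\phi_{0}}$, where $M_{0}$ is an explicit real $2\times2$ matrix with $\operatorname{Tr}M_{0}=1$ and eigenvalues $\tfrac{1\pm n_{1}}{2}$, $n_{1}=\sqrt{1+4\nu}$, and $k_{t}$ is a Gaussian built from $C_{0}$ and $M_{0}$. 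Contrary to the case $\alpha=\pi/2$, here $C_{0}\neq0$ and the map $e^{-tM_{0}}$ is hyperbolic (it has an expanding direction), so that the weight $k_{t}$ and the non-standard weight $\phi_{0}$ are both essential and interact.

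Next I would compute the norm of this model operator. Conjugating by $\mathcal B$ and changing variables $w=e^{-tM_{0}}z$ turns $\|e^{-tK^{*}_{\nu,0}}u\|^{2}_{L^{2}}$ into $e^{2t}\int_{\mathbb{C}^{2}}|u(w)|^{2}e^{-\frac12\langle P_{t}w,w\rangle}L(dw)$ for an explicit $2\times2$ Hermitian matrix $P_{t}$ assembled from $(e^{tM_{0}})^{*}e^{tM_{0}}$, the quadratic data of $k_{t}$ and of $\phi_{0}$; taking the supremum over $w$ and testing on the coherent states of $H_{\phi_{0}}$ (reproducing kernels, pulled back to $L^{2}(\mathbb{R}^{2})$) shows that $\|e^{-tK_{\nu,0}}\|^{2}=e^{2t}(\det P_{t})^{-1}$, the supremum being attained (it is this step, not needed in Lemma~\ref{lem3.3}, that upgrades the bound to an identity). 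Using the expression \eqref{eq18}, $\kappa(t)=e^{-itH_{K_{\nu,0}}}=e^{i\frac t2E}\big(C(t)+iS(t)(I+2\sqrt{\nu}\,J)\big)$ with $C(t)=\ch(tn_{1}/2)$, $S(t)=\sh(tn_{1}/2)/n_{1}$, together with the biquaternion relations \eqref{eq.IJK2}\eqref{eq.IJKbar}, $P_{t}$ can be written in the Pauli basis just as $(e^{tM})^{*}e^{tM}$ was in the proof of Lemma~\ref{lem3.3}.

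The decisive point is then the same algebraic collapse as there. Using $C(t)^{2}-n_{1}^{2}S(t)^{2}=1$, the eigenvalues of the relevant matrix come out of the form $e^{\pm t}\bigl(1+2S^{2}\pm2S\sqrt{1+S^{2}}\bigr)=e^{\pm t}e^{\pm2\operatorname{Argsh}S}$, exactly as in the identity $\lambda_{-}=e^{t}e^{-2\operatorname{Argsh}|S|}$ obtained for $\alpha=\pi/2$; the factor $e^{2t}$ from the Jacobian and the factor $e^{\pm t}$ from the eigenvalues cancel, leaving $e^{2t}(\det P_{t})^{-1}=e^{-2\operatorname{Argsh}(S(t))}$ with $S(t)=\sh(t\sqrt{4\nu+1}/2)/\sqrt{4\nu+1}$, which is the claimed formula.

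I expect the main obstacle to be carrying out the FBI bookkeeping for $\alpha=0$ accurately enough to land on the exact constant with no spurious multiplicative factor: because $C_{0}\neq0$ and the composition part is hyperbolic rather than elliptic, the boundedness and exact-norm analysis of the weighted composition model operator (in the spirit of Carswell--MacCluer--Schuster and of \cite{AlVi}) is more delicate than in the elliptic case, and one must pair it with the coherent-state lower bound to get equality. A possibly cleaner alternative, avoiding the weighted composition operator, is to compute directly the self-adjoint positive compact Gaussian operator $e^{-tK_{\nu,0}}e^{-tK^{*}_{\nu,0}}$ from the quaternionic product of the two Hamilton flows (using $\overline{J}=-J$, $\overline{K}=-K$ for $\alpha=0$), recognize it as metaplectically equivalent to a tensor product of two harmonic-oscillator semigroups, and read off its largest eigenvalue; the same eigenvalue computation again produces $e^{-2\operatorname{Argsh}(S(t))}$.
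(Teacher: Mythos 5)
There is a genuine gap in your main route, located exactly at the step that is supposed to upgrade an upper bound to an identity. The claimed formula $\|e^{-tK_{\nu,0}}\|^{2}=e^{2t}(\det P_{t})^{-1}$, justified by ``taking the supremum over $w$ and testing on coherent states'', does not hold by any such argument. First, for holomorphic weighted $L^{2}$ spaces the supremum of the ratio of the two Gaussian weights is only an upper bound and is in general strictly larger than the operator norm (already for the pure composition operator $u\mapsto u(az)$, $0<a<1$, on the Bargmann space the sup-of-weights bound is $a^{-2}$ while the norm is $1$), and coherent states do not saturate it; second, a supremum over $w$ cannot produce a determinant, and a determinant cannot produce the answer here: the exact norm is $(\mu_{1}/\mu_{2})^{1/4}$, a \emph{ratio} of eigenvalues of $\overline{\kappa(t)^{-1}}\,\kappa(t)$ (whose total determinant is $1$ and whose $2\times2$ blocks have determinants $e^{\mp 2t}$, with no $\operatorname{Argsh}$ in them), so no expression of the form $e^{2t}(\det P_{t})^{-1}$ with $P_{t}$ Hermitian can be correct unless one has already performed the eigenvalue selection that is the real content of the proof. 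Moreover, when the multiplier $k_{t}$ and the pluriharmonic part of $\phi_{0}$ are nontrivial (as you yourself stress), the transformed weight is a real, non-Hermitian quadratic form, monomials are no longer an orthogonal diagonalizing family, and the norm of such a Gaussian weighted-composition operator is precisely what Theorem 1.3 of \cite{Vio2} computes through the singular values of the associated positive canonical transformation; your sketch replaces that theorem by an unproved and, as stated, false identity. (A sanity check: your mechanism, read as a pure composition with constant multiplier, would give a norm $e^{-t\,\mathrm{Re}\,c_{0}}$, which is incompatible with the known behaviours $\|e^{-tK_{\nu,0}}\|\simeq e^{-t/2}$ as $t\to0^{+}$ and $\simeq n_{1}e^{-tn_{1}/2}$ as $t\to\infty$.) The normal-form bookkeeping you assert ($\operatorname{Tr}M_{0}=1$, eigenvalues $\tfrac{1\pm n_{1}}{2}$, $C_{0}\neq0$, hyperbolic $e^{-tM_{0}}$) is also unverified and in tension with the spectrum of $K_{\nu,0}$, whose bottom real part is $n_{1}/2$; settling this is exactly the delicate part you defer.

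Your one-sentence alternative (compute $e^{-tK_{\nu,0}}e^{-tK_{\nu,0}^{*}}$ from the quaternionic product of the flows and read off its largest eigenvalue) is essentially the paper's argument: the paper computes $\overline{\kappa(t)^{-1}}\,\kappa(t)=e^{itE}(a+bI-cJ)(a+bI+cJ)$ with $a=C(t)$, $b=iS(t)$, $c=2i\sqrt{\nu}S(t)$ using \eqref{eq18}, finds its eigenvalues $e^{\mp t}\big(1+2S^{2}\pm 2S\sqrt{1+S^{2}}\big)$ via the biquaternion norm $N(v)=4b^{2}(1-b^{2})$, and then invokes Theorem 1.3 of \cite{Vio2} to conclude $\|e^{-tK_{\nu,0}}\|=(\mu_{1}/\mu_{2})^{1/4}=e^{-\operatorname{Argsh}(S(t))}$. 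If you want a self-contained proof along your alternative route, you must either cite that exact-norm theorem or actually prove that the positive compact operator $e^{-tK_{\nu,0}}e^{-tK_{\nu,0}^{*}}$ is the metaplectic-type quantization of $\kappa(t)\overline{\kappa(t)}^{-1}$ and that its spectral radius is governed by the appropriate eigenvalues of that matrix; that justification, not the quaternionic algebra (which you reproduce correctly), is the missing ingredient.
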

\begin{proof}
Using (\ref{eq18}) and (\ref{eq19}), we directly compute that
\begin{align*}
\overline{(\kappa(t))^{-1}}\;\kappa(t):=\overline{e^{-itH_{K_{\nu,0}}}}e^{itH_{K_{\nu,0}}}=e^{itE}\Big(a+bI-cJ\Big)\Big(a+bI+cJ\Big)~,
\end{align*}
with $a=C(t),\;b=iS(t)\; \text{and} \;c=2izS(t)~.$

Note that $\Big(a+bI-cJ\Big)\Big(a+bI+cJ\Big)
=a^2-b^2+c^2+v$. Furthermore, $a^2+b^2+c^2=1$ and  $(a^2-b^2+c^2)^2+N(v)=1$. It follows that $N(v)=1-(a^2-b^2+c^2)^2=1-(1-2b^2)^2=4b^2(1-b^2).$

Denote $\sh(u)=\sqrt{-b^2}$, so $\sqrt{-N(v)}=2\sh(u)\ch(u)=\sh(2u).$

The eigenvalues of $\overline{(\kappa(t))^{-1}}\;\kappa(t)$ are given by
\begin{align*}&\frac{1}{\mu_1}=e^{t}(a^2-b^2+c^2+\sqrt{-N(v)})\\&\mu_1=e^{-t}(a^2-b^2+c^2-\sqrt{-N(v)})\\&\frac{1}{\mu_2}=e^{t}(a^2-b^2+c^2-\sqrt{-N(v)})\\&\mu_2=e^{-t}(a^2-b^2+c^2+\sqrt{-N(v)})~.\end{align*}
Therefore (see \cite{Vio2} Theorem 1.3),
 $$\|e^{-tK_{\nu,0}}\|_{\mathcal{L}(L^2(\mathbb{R}^2))}=(\mu_1\frac{1}{\mu_2})^{\frac{1}{4}}=e^{-\frac{1}{2}\operatorname{Argsh}(\sqrt{-N(v)})}=e^{-Argsh(\sqrt{-b^2})}~,$$
where $$-b^2=\Big(S(t)\Big)^2=\Big(\frac{\sh(\frac{tn_1}{2})}{n_1}\Big)^2~.$$
\end{proof}

\begin{prop}There exists some $c > 0$ such that, for all $\nu > c$,
\begin{align*}
\|K_{\nu,0}^{-1}\|_{\mathcal{L}(L^2(\mathbb{R}^2))}\le  c\frac{\log(\nu)}{\sqrt{\nu}}~.
\end{align*}
\end{prop}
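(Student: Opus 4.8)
The plan is to integrate the exact semigroup norm furnished by the preceding lemma. Since the potential $V(q)=-\nu q^{2}/2$ has $\operatorname{Tr}_{-}=\nu\neq 0$, the operator $K_{\nu,0}$ is invertible, and, exactly as in Proposition~\ref{prop1.1}, one has
\[
\|K_{\nu,0}^{-1}\|_{\mathcal{L}(L^{2}(\mathbb{R}^{2}))}\le \int_{0}^{+\infty}\|e^{-tK_{\nu,0}}\|_{\mathcal{L}(L^{2}(\mathbb{R}^{2}))}\,dt=\int_{0}^{+\infty}e^{-\operatorname{Argsh}(S(t))}\,dt,
\]
where $S(t)=\sh(rt/2)/r$ and $r=\sqrt{4\nu+1}$. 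Thus the whole problem reduces to showing that this last integral is bounded by $c\log(\nu)/\sqrt{\nu}$ for $\nu$ large.

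I would split the integral at the crossover time $t_{*}=\tfrac{2}{r}\operatorname{Argsh}(r)$, which is precisely the time at which $S(t_{*})=1$. On $[0,t_{*}]$ one simply uses that $\operatorname{Argsh}$ is nonnegative on $[0,+\infty)$, so that $e^{-\operatorname{Argsh}(S(t))}\le 1$; hence this part contributes at most $t_{*}=2\operatorname{Argsh}(r)/r$, and since $\operatorname{Argsh}(r)=\log(r+\sqrt{r^{2}+1})\le\log(3r)$ while $r$ is comparable to $2\sqrt{\nu}$, this is $\le c\log(\nu)/\sqrt{\nu}$. On $[t_{*},+\infty)$ I would use the elementary identity $e^{-\operatorname{Argsh}(x)}=(x+\sqrt{x^{2}+1})^{-1}\le 1/x$, giving $e^{-\operatorname{Argsh}(S(t))}\le r/\sh(rt/2)$; the substitution $s=rt/2$ turns the remaining integral into $2\int_{\operatorname{Argsh}(r)}^{+\infty}\sh(s)^{-1}\,ds$, and since $1/\sh(s)\le 4e^{-s}$ once $s$ is bounded below (which holds here, $\operatorname{Argsh}(r)$ being large for $\nu$ large), this is $\le 8\,e^{-\operatorname{Argsh}(r)}=8/(r+\sqrt{r^{2}+1})\le 8/r\le c/\sqrt{\nu}$. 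Adding the two pieces yields the claimed estimate.

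The argument is entirely elementary once the exact norm $\|e^{-tK_{\nu,0}}\|=e^{-\operatorname{Argsh}(S(t))}$ is in hand, so there is no genuine obstacle; the only points requiring a little care are the explicit estimates on $\operatorname{Argsh}$ and $1/\sh$ near the crossover $t_{*}$ and the verification that the final constant can absorb all $\nu$-independent factors provided $\nu$ is taken large enough (in particular $\nu>1$, so that $\log\nu>0$). It is worth recording the structural reason the logarithm appears and is essentially sharp: it comes from the length $t_{*}\asymp \log(\nu)/\sqrt{\nu}$ of the time interval on which the contraction semigroup has not yet begun to decay.
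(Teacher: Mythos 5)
Your proposal is correct and follows essentially the same route as the paper: both integrate the exact semigroup norm $e^{-\operatorname{Argsh}(S(t))}$, split the time integral at a crossover of order $\log(\nu)/\sqrt{\nu}$ (the paper at $u=\log\nu$ after substituting $u=tn_1/2$, you at $S(t_*)=1$), bound the contraction factor trivially on the short initial interval, and use the exponential decay of $1/\sh$ on the tail. The minor difference in the choice of splitting point and in bounding the integrand ($\le 1$ versus $\le 2/n_1$ after substitution) does not change the argument in any essential way.
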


\begin{proof} Observing that 
\begin{align*}
\|K_{\nu,0}^{-1}\|_{\mathcal{L}(L^2(\mathbb{R}^2))}=\|\displaystyle\int_0^{+\infty}e^{-tK_{\nu,0}}dt\|_{\mathcal{L}(L^2(\mathbb{R}^2))}\le \int_0^{+\infty}\|e^{-tK_{\nu,0}}\|_{\mathcal{L}(L^2(\mathbb{R}^2))}dt{~,}
\end{align*}
we aim to obtain an upper bound of the right-hand side.

Using the exact norm of the semigroup generated by $K_{\nu,0}$, we write   \begin{align*}
\int_0^{+\infty}\|e^{-tK_{\nu,0}}\|_{\mathcal{L}(L^2(\mathbb{R}^2))}dt&=\int_0^{+\infty}e^{-\operatorname{Argsh}\Big(\frac{\sh(\frac{tn_1}{2})}{n_1}\Big)}dt=\int_0^{+\infty}\frac{1}{\frac{\sh(\frac{tn_1}{2})}{n_1}+\sqrt{1+\Big(\frac{\sh(\frac{tn_1}{2})}{n_1}\Big)^2}}dt\\&=\int_0^{\log(\nu)}\frac{2du}{\sh(u)+\sqrt{n_1^2+\sh^2(u)}}+\int_{\log(\nu)}^{+\infty}\frac{2du}{\sh(u)+\sqrt{n_1^2+\sh^2(u)}}\\&\le 2\Big({\frac{\log(\nu)}{n_1}}+\int_{\log(\nu)}^{+\infty}e^{-u}du\Big)\\&\le 2\Big({\frac{\log(\nu)}{n_1}}+\frac{1}{\nu}\Big)\le c\;\frac{\log(\nu)}{\sqrt{\nu}}~.
\end{align*}
 This completes the proof.

\end{proof}

\subsection{Optimality with a logarithmic factor}
\begin{prop} One can find a function $u\in L^2(\mathbb{R}^2)$ such that
\begin{align*}
\|K_{\nu,0}u\|_{L^2(\mathbb{R}^2)}\le c\;\frac{\sqrt{\nu}}{\sqrt{\log(\nu)}}\|u\|_{L^2(\mathbb{R}^2)}
\end{align*}
where $c>0$ is a constant that does not depend on the parameter $\nu\gg 1$.
\end{prop}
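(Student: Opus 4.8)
\section*{Proof proposal}

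The statement is equivalent to the resolvent lower bound $\|K_{\nu,0}^{-1}\|_{\mathcal L(L^2(\mathbb R^2))}\gtrsim \sqrt{\log\nu}/\sqrt\nu$ for $\nu\gg1$, and the plan is to produce the quasimode $u$ directly from the semigroup. Since $K_{\nu,0}$ is maximal accretive and invertible, $K_{\nu,0}^{-1}=\int_0^{+\infty}e^{-tK_{\nu,0}}\,dt$ and, for every $T>0$, $K_{\nu,0}\int_0^{T}e^{-tK_{\nu,0}}\,dt=\operatorname{Id}-e^{-TK_{\nu,0}}$. I would fix $T_\nu$ by $S(T_\nu)=1$, i.e. $\sh(\tfrac{T_\nu n_1}{2})=n_1$ with $n_1=\sqrt{4\nu+1}$; by the exact norm formula of the previous lemma $\|e^{-T_\nu K_{\nu,0}}\|_{\mathcal L(L^2)}=e^{-\operatorname{Argsh}(1)}=\sqrt2-1$, and $T_\nu=\tfrac{2}{n_1}\operatorname{Argsh}(n_1)\ge c\,\tfrac{\log\nu}{\sqrt\nu}$. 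Let $\psi=\psi_\nu$ be a unit top singular vector of $e^{-T_\nu K_{\nu,0}}$, and set $u=u_\nu=\int_0^{T_\nu}e^{-tK_{\nu,0}}\psi\,dt$. Then $K_{\nu,0}u=\psi-e^{-T_\nu K_{\nu,0}}\psi$, whence $\|K_{\nu,0}u\|\le 1+\|e^{-T_\nu K_{\nu,0}}\|=\sqrt2$. It therefore remains to prove the matching lower bound $\|u\|^2\ge c\,\nu^{-1}\log\nu$, which yields $\|K_{\nu,0}u\|\le \sqrt2\,(c^{-1}\nu/\log\nu)^{1/2}\|u\|$, i.e. the claim.

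For the lower bound on $\|u\|$ I would expand $\|u\|^2=\iint_{[0,T_\nu]^2}\langle e^{-tK_{\nu,0}}\psi,e^{-sK_{\nu,0}}\psi\rangle\,dt\,ds$ and use three facts. First, since the semigroup is contractive and $\psi$ maximises the norm at time $T_\nu$, writing $e^{-T_\nu K_{\nu,0}}\psi=e^{-(T_\nu-t)K_{\nu,0}}(e^{-tK_{\nu,0}}\psi)$ gives $\|e^{-tK_{\nu,0}}\psi\|\ge \sqrt2-1$ for all $t\in[0,T_\nu]$. Second, conjugating $K_{\nu,0}$ by the explicit $q$-dilation that identifies it with the Kramers--Fokker--Planck operator $K_V$, $V(q)=-\tfrac{\nu q^2}{2}$, one has $\langle e^{-tK_{\nu,0}}\psi,e^{-sK_{\nu,0}}\psi\rangle=\langle e^{-tK_V}\chi_0,e^{-sK_V}\chi_0\rangle$ for some real $\chi_0$; choosing $\psi$ so that $\chi_0\ge0$ (possible since the top singular vector of $e^{-T_\nu K_{\nu,0}}$ is the image under the dilation of the top eigenvector of the positivity-improving self-adjoint operator $e^{-T_\nu K_V^{*}}e^{-T_\nu K_V}$) and using that the Kramers--Fokker--Planck semigroup is positivity preserving (Trotter splitting of the transport flow and the $p$-diffusion, cf.\ \cite{HeNi}), this scalar product is $\ge0$ for all $t,s\ge0$. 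Third, a second-order Taylor expansion at $s$, using $\operatorname{Re}\langle K_{\nu,0}v,v\rangle=\langle O_p v,v\rangle$ (as $X_0$ is skew-adjoint),
$$\operatorname{Re}\langle e^{-tK_{\nu,0}}\psi,e^{-sK_{\nu,0}}\psi\rangle=\|e^{-sK_{\nu,0}}\psi\|^2-(t-s)\langle O_p e^{-sK_{\nu,0}}\psi,e^{-sK_{\nu,0}}\psi\rangle+\mathcal O\!\big((t-s)^2\nu\big)\quad(t\ge s)$$
shows that $\langle e^{-tK_{\nu,0}}\psi,e^{-sK_{\nu,0}}\psi\rangle\ge\tfrac12(\sqrt2-1)^2$ whenever $|t-s|\le c_0\nu^{-1/2}$ with $c_0>0$ a small absolute constant. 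Restricting the (nonnegative) integrand to this diagonal strip then gives $\|u\|^2\ge \tfrac12(\sqrt2-1)^2\,c_0\nu^{-1/2}\,T_\nu\gtrsim \log\nu/\nu$, as needed.

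The main obstacle is the uniform control of the remainder $\mathcal O((t-s)^2\nu)$, that is the bound $\|K_{\nu,0}^{2}e^{-rK_{\nu,0}}e^{-sK_{\nu,0}}\psi\|\le C\nu\,\|\psi\|$ valid for $0\le s\le T_\nu$ and $0\le r\le c_0\nu^{-1/2}$, uniformly in $\nu$. This is exactly the assertion that the Gaussian states $e^{-\tau K_{\nu,0}}\psi$ remain, for $\tau$ up to a fixed multiple of $T_\nu$, at a bounded phase-space scale, and it is here that the explicit metaplectic representation of $e^{-tK_{\nu,0}}$ in terms of the canonical transformation $e^{i\frac t2(E+I+2\sqrt\nu J)}$ from \eqref{eq18} and the associated Mehler-type formula are used: the time $T_\nu$ corresponds to a bounded amount of Schmidt squeezing for $e^{-T_\nu K_{\nu,0}}$ (the two Schmidt exponents sum to $2\operatorname{Argsh}(1)$), so the squeezing stays bounded independently of $\nu$, which controls $K_{\nu,0}^{2}$ applied to these states. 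The remaining items --- the choice of $T_\nu$, the identity for $K_{\nu,0}u$, the positivity input, and the Taylor expansion --- are routine. Together with the upper bound $\|K_{\nu,0}^{-1}\|\le c\log\nu/\sqrt\nu$ of the preceding proposition, this pins down the resolvent norm up to the exponent of the logarithm, which is the announced optimality.
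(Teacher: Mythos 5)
Your route is genuinely different from the paper's, and it contains a real gap at its quantitative core. You build the quasimode by averaging the full semigroup, $u=\int_0^{T_\nu}e^{-tK_{\nu,0}}\psi\,dt$ with $\psi$ a top singular vector of $e^{-T_\nu K_{\nu,0}}$ and $T_\nu\sim\log\nu/\sqrt{\nu}$, so that $\|K_{\nu,0}u\|\le\sqrt2$ for free; everything then hinges on $\|u\|^2\gtrsim\log\nu/\nu$, which you get by restricting the (claimed nonnegative) double integral to the strip $|t-s|\le c_0\nu^{-1/2}$, and this in turn requires that $\langle e^{-tK_{\nu,0}}\psi,e^{-sK_{\nu,0}}\psi\rangle$ stays comparable to $\|e^{-sK_{\nu,0}}\psi\|^2$ on that strip. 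The step you yourself flag as ``the main obstacle'' is exactly the missing piece: you need a bound of the type $\|K_{\nu,0}\,e^{-\tau K_{\nu,0}}\psi\|\le C\sqrt{\nu}$ (or $\|K_{\nu,0}^2e^{-\tau K_{\nu,0}}\psi\|\le C\nu$) \emph{uniformly for all intermediate times} $0\le\tau\le T_\nu$ and uniformly in $\nu$, i.e.\ that the evolved Gaussian states remain at a bounded phase-space scale. The justification you offer --- that the Schmidt exponents of the endpoint operator $e^{-T_\nu K_{\nu,0}}$ sum to $2\operatorname{Argsh}(1)$, hence ``the squeezing stays bounded'' --- is not a proof: for a non-normal evolution, control of the singular values at the endpoint does not control the squeezing of $e^{-\tau K_{\nu,0}}\psi$ at intermediate $\tau$, and over the time $T_\nu$ the underlying hyperbolic flow stretches phase space by a factor of order $e^{\sqrt{\nu}\,T_\nu}\sim\nu$, so this uniformity has to be established by an actual computation (e.g.\ with the explicit formula for $\kappa(t)$ or on the Bargmann side). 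Two further points need tightening, though they are fixable: the first-order term $\langle O_p\,e^{-sK_{\nu,0}}\psi,e^{-sK_{\nu,0}}\psi\rangle$ is only small \emph{on average} in $s$ (its integral over $[0,T_\nu]$ is at most $\tfrac12$), not pointwise, so you must discard a small exceptional set of $s$ before restricting to the strip; and the choice of a nonnegative $\chi_0$ requires a Perron--Frobenius argument for $e^{-TK_V^*}e^{-TK_V}$, i.e.\ strict positivity of the Kramers--Fokker--Planck kernel, which should be stated rather than assumed.

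For comparison, the paper avoids all of these uniformity issues by averaging along the \emph{unitary} hyperbolic group alone: it takes $u=\frac1L\int_0^Le^{sX_0}\varphi\,ds$ with $\varphi$ the standard Gaussian and $L=\frac{\log\nu}{4}$, and computes everything explicitly --- $\|X_0u\|^2=\frac{2}{L^2}\bigl(1-\frac{1}{\ch L}\bigr)$, $\|u\|^2\ge c/L$ from the exact overlap $\langle\varphi_s,\varphi\rangle=1/\ch(s)$, and $\|O_p\varphi_s\|^2=\frac14\ch(4s)$ via Hermite functions, giving $\|O_pu\|^2\le e^{4L}/L^2$ --- so the ratio $\bigl(e^{4L}+\nu\bigr)/L\sim\nu/\log\nu$ follows from closed-form Gaussian computations with no need to control non-explicit evolved states. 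If you want to salvage your semigroup-averaging argument, you would have to supply the uniform intermediate-time bound, for instance by computing the covariance of $e^{-\tau K_{\nu,0}}\psi$ explicitly from the quaternionic representation of the flow; as written, the proof is incomplete at precisely the step that carries the $\nu$-uniformity burden.
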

\begin{proof} We recall here that \begin{align*}K_{\nu,0}=\frac{1}{2}(-\partial_p^2+p^2)+\sqrt{\nu}\Big(p\partial_q+q\partial_p\Big)=O_p+\sqrt{\nu}X_{0}~.
\end{align*}
For all $u\in D(K_{\nu,0})$,
\begin{align*}
\|K_{\nu,0}u\|^2_{L^2(\mathbb{R}^2)}\le 2\Big(\|O_pu\|^2_{L^2(\mathbb{R}^2)}+\nu \|X_0u\|^2_{L^2(\mathbb{R}^2)}\Big)~,
\end{align*}
then to prove the Proposition we will look for a function $u\in L^2(\mathbb{R}^2)$  such that $$\frac{\|O_pu\|^2_{L^2(\mathbb{R}^2)}+\nu \|X_0u\|^2_{L^2(\mathbb{R}^2)}}{\|u\|^2_{L^2(\mathbb{R}^2)}}\le c\;\frac{\nu}{\log(\nu)}~.$$
Consider the Gaussian
\begin{align*}
\varphi(q,p)=\frac{e^{-\frac{(q^2+p^2)}{2}}}{\sqrt{\pi}}
\end{align*}
and set
\begin{align*}
u(q,p)=\frac{1}{L}\int_{0}^Le^{sX_0}\varphi ds=\frac{1}{L}\int_{0}^L\varphi_s(q,p) ds
\end{align*}
where $\varphi_s(q,p)=e^{sX_0}\varphi(q,p)$ and $L>0$ is a constant to be specified at the end of the proof.

One has 
\begin{align*}\frac{d}{ds}\varphi_s=X_0(\varphi_s)=(p\partial_q+q\partial_p)\varphi_s{~.}\end{align*}

Let $\Big(q(t),p(t)\Big)$ be the solution of the following system: 
\begin{align*}
  \left\{
    \begin{array}{ll}
   \frac{d}{dt}q=p\\\\\  \frac{d}{dt}p=q\end{array}
\right.
\end{align*}
with $(q(0), p(0)) = (q_0, p_0)$. The solution is given by
\begin{align*}
 \left\{
    \begin{array}{ll}
    q(t)=\ch(t)q_0+\sh(t)p_0\\\\\  p(t)=\sh(t)q_0
    + \ch(t)p_0~.\end{array}
\right.
\end{align*}
The function $\varphi_s$ verifies
$$ \frac{d}{ds}\Big( \varphi_s(q(-s),p(-s))\Big)=\frac{\partial}{\partial s}\varphi_s-\frac{d}{ds}q(-s)\partial_q \varphi_s-\frac{d}{ds}p(-s)\partial_p \varphi_s=0~,$$
then
\begin{align*}\varphi_s(q,p)&=\varphi_0(q(s),p(s))=\varphi\Big(\ch(s)q+\sh(s)p,\sh(s)q+\ch(s)p\Big)\\&=\frac{1}{\sqrt{\pi}}e^{-\frac{\Big(\ch(s)q+\sh(s)p\Big)^2+\Big(\sh(s)q+\ch(s)p\Big)^2}{2}}{~.}\end{align*}
For all $p\in[0,+\infty]$, $\|\varphi_s\|_{L^p}=\|\varphi\|_{L^p}$. In particular,
$\|\varphi_s\|_{L^2}=\|\varphi\|_{L^2}=1~.$

Let's start by calculating $\|X_0u\|_{L^2(\mathbb{R}^2)}$:
\begin{align*}
X_0u=\frac{1}{L}\int_0^LX_0e^{sX_0}\varphi ds=\frac{1}{L}\int_0^L\frac{d}{ds}\varphi_s ds=\frac{1}{L}(\varphi_L-\varphi)~.
\end{align*}
As a result, \begin{align*}
\displaystyle\|X_0u\|^2_{L^2(\mathbb{R}^2)}&=\frac{1}{L^2}\|\varphi_L-\varphi\|^2_{L^2(\mathbb{R}^2)}=\frac{1}{L^2}\Big(\underbrace{\|\varphi_L\|^2_{L^2(\mathbb{R}^2)}}_{=1}+\underbrace{\|\varphi\|^2_{L^2(\mathbb{R}^2)}}_{=1}-2\int_{\mathbb{R}^2}\varphi_L\varphi\; dqdp\Big)\\&=\frac{2}{L^2}\Big(1-\int_{\mathbb{R}^2}\varphi_L\varphi\;dqdp\Big){~.}
\end{align*}
We directly compute that
\begin{align*}
\displaystyle\int_{\mathbb{R}^2}\varphi_L(q,p)\varphi(q,p) dqdp&=\frac{1}{\pi}\int_{\mathbb{R}^2}e^{-\frac{\Big(\ch(L)q+\sh(L)p\Big)^2+\Big(\sh(L)q+\ch(L)p\Big)^2}{2}}e^{-\frac{q^2+p^2}{2}}dqdp\\&=\frac{1}{\pi}\int_{\mathbb{R}^2}e^{-\frac{1}{2}\Big[2\ch^2(L)q^2+2\ch^2(L)p^2+4\sh(L)\ch(L)qp\Big]}dqdp\\&=\frac{1}{\pi}\int_{\mathbb{R}^2}e^{-\frac{1}{2}(q,p)A\;^t(q,p)}dqdp=\frac{1}{\pi}\sqrt{\frac{(2\pi)^2}{det(A)}}=\frac{1}{\ch(L)}
\end{align*}
where \begin{align*}A=\begin{pmatrix}
2\ch^2(L)&2\ch(L)\sh(L)\\\\2\ch(L)\sh(L)&2\ch^2(L)
\end{pmatrix}~.\end{align*}
Then
\begin{align}
\|X_0u\|^2_{L^2(\mathbb{R}^2)}=\frac{2}{L^2}\Big(1-\frac{1}{\ch(L)}\Big)~.\label{eq8}
\end{align}
Now, let's find a lower bound for $\|u\|^2_{L^2(\mathbb{R}^2)}$:
\begin{align*}
\|u\|^2_{L^2(\mathbb{R}^2)}&=\frac{1}{L^2}\int_0^{L}\int_0^{L}\mathbb{R}\text{e}\langle \varphi_{s_1},\varphi_{s_2}\rangle_{L^2(\mathbb{R}^2)}ds_1ds_2\\&=\frac{2}{L^2}\int_0^{L}\Big[\int_{s_1}^{L}\mathbb{R}\text{e}\langle \varphi_{s_1},\varphi_{s_2}\rangle_{L^2(\mathbb{R}^2)}ds_2\Big]ds_1\\&\underset{s_2=s_1+s}{=}\frac{2}{L^2}\int_0^{L}\Big[\int_{0}^{L-s_1}\mathbb{R}\text{e}\langle \varphi_{s_1},\varphi_{s_1+s}\rangle_{L^2(\mathbb{R}^2)}ds\Big]ds_1{~.}
\end{align*}
But 
\begin{align*}
\mathbb{R}\text{e}\langle \varphi_{s_1+s},\varphi_{s_1}\rangle_{L^2(\mathbb{R}^2)}&=\langle e^{s_1X_0}\varphi,e^{(s_1+s)X_0}\varphi\rangle_{L^2(\mathbb{R}^2)}\\&=\langle e^{s_1X_0}\varphi,e^{sX_0}\varphi\rangle_{L^2(\mathbb{R}^2)}\\&=\int_{\mathbb{R}^2}\varphi_s(q,p)\varphi(q,p) dqdp=\frac{1}{\ch(s)}{~.}
\end{align*}
For $L>2$ we obtain
\begin{align}
\displaystyle\|u\|^2_{L^2(\mathbb{R}^2)}&=\frac{2}{L^2}\int_0^L\Big[\int_0^{L-s_1}\frac{1}{\ch(s)}ds\Big]ds_1\nonumber\\&\ge\frac{2}{L^2}\int_0^{\frac{L}{2}}\Big[\int_0^{\frac{L}{2}}\frac{1}{\ch(s)}ds\Big]ds_1\ge\frac{2}{L^2}\int_0^{\frac{L}{2}}\Big[\int_0^{1}\frac{1}{\ch(s)}ds\Big]ds_1\nonumber\\&\ge 
\label{eq9} \frac{c}{L}{~.}\end{align}
The final step is the upper bound of $\|O_pu\|^2_{L^2(\mathbb{R}^2)}$:
\begin{align*}
\|O_pu\|^2_{L^2(\mathbb{R}^2)}&=\|O_p\Big(\frac{1}{L}\int_0^L\varphi_s(q,p)ds\Big)\|^2_{L^2(\mathbb{R}^2)}\le \frac{1}{L^2}\int_0^L\|O_p\varphi_s\|^2_{L^2(\mathbb{R}^2)} ds{~.}
\end{align*}

With $O_p = \frac{1}{2}(D_p^2 + p^2)$, we want to compute
\[
	\|O_p \varphi_s\|_{L^2(\Bbb{R}^2)} = \|e^{-sX_0}O_p e^{sX_0}\varphi_0\|_{L^2(\Bbb{R}^2)}
\]
(because $e^{-sX_0}$ is unitary and $\varphi_s = e^{sX_0}\varphi_0$).

For any $u \in L^2(\Bbb{R}^2)$, $e^{sX_0}u(q,p) = u(e^{sM}(q,p))$ where
\[
	e^{sM} = \left(\begin{array}{cc} \ch s & \sh s \\ \sh s & \ch s\end{array}\right).
\]
Egorov's theorem gives that, for any symbol $a(q,p,\xi_q, \xi_p)$, 
\[
	e^{-sX_0}a^w(q,p,D_q,D_p)e^{sX_0} = a^w(e^{-sM}(q,p), e^{sM}(D_q,D_p)).
\]
In particular, writing $O_q = \frac{1}{2}(D_q^2 + q^2)$ as well,
\[
	\begin{aligned}
	e^{-sX_0}(p^2 + D_p^2)e^{sX_0} &= (-\sh(s)q + \ch(s)p)^2 + (\sh(s)D_q + \ch(s)D_p)^2
	\\ &= \sh^2(s)q^2 - 2\ch(s)\sh(s)qp+\ch^2(s)p^2 
	\\ &\qquad + \sh^2(s)D_q^2 + 2\ch(s)\sh(s)D_qD_p + \ch^2(s)D_p^2
	\\ &= 2\ch^2(s)O_q + 2\sh^2(s)O_p + 2\ch(s)\sh(s)(D_q D_p - qp)~.
	\end{aligned}
\]

We have chosen $\varphi_0$ an eigenfunction of both $O_p$ and $O_q$ with eigenvalue $\frac{1}{2}$, and $D_qD_p\varphi_0 = -qp\varphi_0$. Therefore
\[
	e^{-sX_0}O_p e^{sX_0} \varphi_0 = \left(\frac{1}{2}(\ch^2(s) + \sh^2(s)) - 2\ch(s)\sh(s) qp \right)\varphi_0~.
\]
This can be interpreted as the sum of products of the first two orthornormal Hermite functions: if
\[
	h_0(x) = \pi^{-1/4}e^{-x^2/2}, \quad h_1(x) = \sqrt{2}xh_0(x)~,
\]
then $\varphi_0(q,p) = h_0(q)h_0(p)$ and
\[
	e^{-sX_0}O_p e^{sX_0} \varphi_0 = \frac{1}{2}(\ch^2(s)+\sh^2(s))h_0(q)h_0(p) - \ch(s)\sh(s)h_1(q)h_1(p).
\]
This type of tensor product forms an orthonormal family, so by the Pythagorean relation the square of the norm can be computed as the sum of squares of the coefficients:
\[
	\|O_p\varphi_s\|_{L^2(\Bbb{R}^2)}^2 = \|e^{-sX_0}O_p e^{sX_0}\varphi_0\|_{L^2(\Bbb{R}^2)}^2 = \frac{1}{4}(\ch^2(s) + \sh^2(s))^2 + \ch^2(s)\sh^2(s) = \frac{1}{4}\ch(4s)~.
\]
Thus we deduce that 
 \begin{align}
 \|O_pu\|^2_{L^2}&\le\frac{1}{L^2}\int_0^L e^{4s}ds=\frac{1}{4L^2}(e^{4L}-1)\nonumber\\&\le \frac{1}{L^2}e^{4L}{~.}\label{eq10}\end{align}
The estimates in (\ref{eq8})  and (\ref{eq9}) taken with (\ref{eq10}), allow us to establish that
\begin{align*}
\frac{\|K_{\nu,0}u\|_{L^2}^2}{\|u\|_{L^2}^2}\le\frac{\|O_pu\|_{L^2}^2+\nu \|X_0u\|_{L^2}^2}{\|u\|_{L^2}^2}\le c\;\frac{e^{4L}+\nu\Big(1-\frac{1}{\ch(L)}\Big)}{L}{~.}
\end{align*}
Now letting $L=\frac{\log(\nu)}{4}~,$ we get  the desired inequality
\begin{align*}
\|K_{\nu,0}u\|_{L^2}^2\le c\;\frac{\nu}{\log(\nu)}\|u\|_{L^2}^2{~.}
\end{align*}
 \end{proof}

\section{Degenerate one-dimensional case}
\label{sec:deg}
 \begin{lem} Let ${\lambda_1\in \mathbb{R}}$ be parameter. Consider the operator  \begin{align*} K_1={p.\partial_q-\lambda_1\partial_p}+\frac{1}{2}(-\partial^2_p+p^2-1)
 \end{align*}   with domain $D(K_1)=\left\{u\in L^2(\mathbb{R}^2),\;\;\; K_1u\in  L^2(\mathbb{R}^2)\right\}$.
 There exists a constant $c>0$ such that \begin{align*}
\|(D_q^2+{\lambda_1^2})e^{-t(K_1+1)}\|_{\mathcal{L}(L^2(\mathbb{
 R}^2))}\le\frac{c}{t^3}
 \end{align*} 
holds  for all $t>0$.
 \end{lem}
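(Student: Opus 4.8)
The plan is to exploit that $q$ does not occur in $K_1$, so that the partial Fourier transform in $q$ fibres the problem. Writing $\widehat u(\eta,p)$ for that transform, $D_q^2+\lambda_1^2$ becomes multiplication by $\eta^2+\lambda_1^2$ (which commutes with the evolution), while $K_1+1$ becomes, on the $\eta$--fibre, the one--dimensional operator $L_\eta:=O_p+\tfrac12+i\eta p-\lambda_1\partial_p$ on $L^2(\mathbb{R}_p)$. Since $\mathrm{Re}\,\langle L_\eta u,u\rangle=\langle(O_p+\tfrac12)u,u\rangle\ge\|u\|^2$, $L_\eta$ is maximal accretive, $e^{-tL_\eta}$ is compact for $t>0$ and $\|e^{-tL_\eta}\|\le e^{-t}$; thus the lemma reduces to the uniform bound $\sup_{\eta\in\mathbb{R}}(\eta^2+\lambda_1^2)\,\|e^{-tL_\eta}\|_{\mathcal L(L^2(\mathbb{R}_p))}\le c\,t^{-3}$. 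Completing the square exactly as in Section~3 writes $L_\eta=\widetilde O_\eta+\tfrac12(\eta^2+\lambda_1^2+1)$ with $\widetilde O_\eta$ the quadratic operator of Weyl symbol $\tfrac12\big((p+i\eta)^2+(\xi_p-i\lambda_1)^2\big)$; a symplectic rotation of $(p,\xi_p)$, implemented by a metaplectic unitary, rotates the imaginary shift $(i\eta,-i\lambda_1)$ onto $(ir,0)$ with $r:=\sqrt{\eta^2+\lambda_1^2}$, so that $\|e^{-tL_\eta}\|=e^{-t(r^2+1)/2}\,\|e^{-t\widetilde O_r}\|$ with $\widetilde O_r=\tfrac12(-\partial_p^2+(p+ir)^2)$.

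The crux is to compute $\|e^{-t\widetilde O_r}\|$ in closed form, in the spirit of Lemma~\ref{le:delta0} and of the norm computation of Section~4. Because the Hamilton flow of the symbol of $\widetilde O_r$ is the harmonic--oscillator flow conjugated by the complex translation $p\mapsto p+ir$, the Schwartz kernel of $e^{-t\widetilde O_r}$ is the analytic continuation of the Mehler kernel of $e^{-tO_p}$ evaluated at the shifted arguments $(p+ir,\,p''+ir)$. Expanding $(p+p''+2ir)^2=(p+p'')^2+4ir(p+p'')-4r^2$ splits off the positive scalar $\exp\!\big(r^2(\ch t-1)/\sh t\big)=\exp(r^2\tanh(t/2))$ together with composition, on both sides, by a \emph{unitary} multiplication operator $e^{-i\gamma p}$ ($\gamma\in\mathbb{R}$); what is left is exactly the Mehler kernel of $e^{-tO_p}$. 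Since two--sided composition with a unitary leaves the operator norm unchanged, $\|e^{-t\widetilde O_r}\|=e^{r^2\tanh(t/2)}\,\|e^{-tO_p}\|=e^{r^2\tanh(t/2)}e^{-t/2}$. (The same value follows equally well from the singular--value formula for quadratic semigroups used in Section~4, via the eigenvalues of $\overline{\kappa(t)^{-1}}\kappa(t)$ with $\kappa(t)=e^{-itH_{L_\eta}}$.) Hence $\|e^{-tL_\eta}\|=e^{-t}\exp\!\big(-(\eta^2+\lambda_1^2)(\tfrac t2-\tanh\tfrac t2)\big)$.

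The conclusion is then elementary: with $s:=(\eta^2+\lambda_1^2)(\tfrac t2-\tanh\tfrac t2)\ge 0$ and $\sup_{s\ge0}se^{-s}=e^{-1}$,
\[
	(\eta^2+\lambda_1^2)\,\|e^{-tL_\eta}\|=\frac{e^{-t}}{\tfrac t2-\tanh\tfrac t2}\;s\,e^{-s}\ \le\ \frac{e^{-1}e^{-t}}{\tfrac t2-\tanh\tfrac t2},
\]
and the right--hand side is $\le C_0\,t^{-3}$ because $\tfrac t2-\tanh\tfrac t2=\tfrac{t^3}{24}+O(t^5)$ near $0$ and grows like $\tfrac t2$ at infinity, so that $t\mapsto t^3e^{-t}/(\tfrac t2-\tanh\tfrac t2)$ is continuous and bounded on $(0,\infty)$ (with limits $24$ at $0^+$ and $0$ at $+\infty$). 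Taking the supremum over $\eta$ and undoing the Fourier conjugation yields $\|(D_q^2+\lambda_1^2)e^{-t(K_1+1)}\|_{\mathcal L(L^2(\mathbb{R}^2))}\le c\,t^{-3}$.

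The one genuinely delicate point is the exact norm of $e^{-t\widetilde O_r}$: the $(1,I,J,K)$--shortcut of Section~3 does not apply verbatim since the transport term $p\partial_q$ makes the Hamilton map of $K_1$ parabolic (it carries a nilpotent part), so one must either handle by hand the complex, non-unitary shift $p\mapsto p+ir$ in the Mehler kernel — the decisive fact being that, once the scalar $e^{r^2\tanh(t/2)}$ has been extracted, what remains is a genuine unitary modulation — or reproduce the Hamilton-flow / singular-value computation of Section~4 for the inhomogeneous symbol of $L_\eta$, taking care to keep all constants uniform in $r$ (equivalently, in $\eta$ and $\lambda_1$). The metaplectic reduction $\widetilde O_\eta\cong\widetilde O_r$ also requires the usual care with unbounded conjugations, but is routine.
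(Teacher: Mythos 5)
Your proof is correct and follows essentially the same route as the paper: Fourier decomposition in $q$, a fibre-wise metaplectic rotation in $(p,\xi_p)$ reducing to the complex-shifted harmonic oscillator with shift $r=\sqrt{\xi_q^2+\lambda_1^2}$, the exact fibre norm $e^{-t}\exp\big(-(\xi_q^2+\lambda_1^2)(\tfrac t2-\tnh\tfrac t2)\big)$, and the elementary optimization $\sup_{s\ge 0}se^{-s}$ together with $\tfrac t2-\tnh\tfrac t2\sim t^3/24$. The only deviation is that you rederive the exact norm by hand from the analytically continued Mehler kernel (splitting off the scalar $e^{r^2\tnh(t/2)}$ and two unitary modulations $e^{-i\gamma p}$), whereas the paper simply quotes the formula for $\|e^{-i(t_1+it_2)P_b}\|$ from \cite{Vio2}; your derivation is sound and reproduces exactly the quoted value.
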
 
 \begin{proof}
 
For each $\xi_q$ fixed, there is a metaplectic operator on $L^2(\mathbb{R}_p)$ which, via conjugation, takes $i p.\xi_q - i\lambda_1 D_p$ to $ip\sqrt{\xi_q^2 + \lambda_1^2}$ while leaving $O_p$ invariant. Taking the direct integral of this rotation (whose angle depends on $\xi_q$) gives a unitary equivalence between the operator $K_1$ and 
\[
	\widehat{K_1} =\frac{1}{2}\Big(2ip\sqrt{D_q^2+{\lambda_1^2}}+(-\partial^2_p+p^2-1)\Big).
\]
We also note that $\sqrt{D_q^2 + \lambda_1^2}$ is left invariant by the rotation in the variables $(p, \xi_p)$.

It is shown in \cite{Vio2} that 
\begin{align*}
  \|e^{-i(t_1+it_2)P_b}\|_{\mathcal{L}(L^2(\mathbb{R}))}=\exp\Big(\frac{\cos(t_1)-\ch(t_2)}{\sh(t_2)}b^2\Big)
\end{align*}
for all $t_1\in\mathbb{R}$ and all $t_2<0$~, where $P_b=\frac{1}{2}\Big(D^2_x+x^2-1+2ibx{-}b^2\Big),\;\;\;b\in\mathbb{R}$.
Applying this result with $t_1=0$, $t_2=-t<0$ and $b=b(\xi_q) = \sqrt{\xi_q^2+{\lambda_1^2}}~$, we obtain
\begin{align*}
	\|\sqrt{D_q^2 + \lambda_1^2}e^{-t\widehat{K_1}}\|_{\mathcal{L}(L^2(\mathbb{R}^2))} & \leq \sup_{\xi_q \in \mathbb{R}}\|b^2e^{-t(P_b + \frac{b^2}{2})}\|_{\mathcal{L}(L^2(\mathbb{R}^2))} = \sup_{\xi_q \in \mathbb{R}} b^2e^{-\frac{t}{2}b^2}e^{(\frac{\ch(t)-1}{\sh(t)})b^2}{~.}
\end{align*}
(We remark that this inequality can be strengthened to an equality by taking the tensor product of explicit optimisers for the norm of $e^{-tP_b}$ with functions in $q$ localized in phase space near the optimising $\xi_q$.)

For all $t\in [0,1]$, denote $f_b(t)=b^2e^{(\frac{\ch(t)-1}{\sh(t)}-\frac{t}{2})b^2}$, and $u(t)=\frac{\ch(t)-1}{\sh(t)}-\frac{t}{2}=\tnh(\frac{t}{2})-\frac{t}{2}<0$.

Since $\max\limits_{x\in\mathbb{R}}\;xe^{-ax}=\frac{e^{-a}}{a}$ when $a>0$\,, we get
  \begin{align*}
  b^2t^3\exp\Big(u(t)b^2\Big)\le \frac{-t^3e^{u(t)}}{u(t)}=:F(t)\,.
  \end{align*}
 The expansion
$u(t)=\tnh(\frac{t}{2})-\frac{t}{2}=\frac{-t^3}{24}+\mathcal{O}(t^4)$
yields
 $\lim_{t\to 0} F(t)=24$ and the function $F$ is bounded on
 the interval $[0,1]$\,. Replacing $b^{2}$ with $D_{q}^{2}+\lambda_{1}^{2}$\,, we conclude that, for $t\in[0,1]$\,,
  \begin{align*}
\|(D_q^2+{\lambda_1^2})e^{-t(K_1+1)}\|_{\mathcal{L}(L^2(\mathbb{
 R}^2))}\le\frac{c}{t^3}~.
  \end{align*}
 For all $t\ge1$, just write with $t_{0}=\frac{1}{2}$\,,
\begin{align*}
 \|(D_q^2+{\lambda_1^2})e^{-t(K_1+1)}\|_{\mathcal{L}(L^2(\mathbb{
 R}^2))}\le\underbrace{\|(D_q^2+{\lambda_1^2})e^{-t_0K_1}\|_{\mathcal{L}(L^2(\mathbb{
 R}^2))}}_{\le \frac{c}{t_0}}\underbrace{\|e^{-(t-t_0)K_1}\|_{\mathcal{L}(L^2(\mathbb{
 R}^2))}}_{\le 1}e^{-\frac{t}{2}}\le\frac{c}{t^3}~.
 \end{align*} 
\end{proof}

\begin{appendices}
 \section{Biquaternions}

We define a biquaternion $W$ as follows:
$$W=a+b\textbf{i}+c\textbf{j}+d\textbf{k}$$ where $a,b,c,d$ are complex numbers and  $\textbf{i}, \textbf{j}, \textbf{k}$ multiply according to the rules
\begin{align*}
&\textbf{i}^2=\textbf{j}^2=\textbf{k}^2=\textbf{i}\textbf{j}\textbf{k}=-1\\&
\textbf{i}\textbf{j}= -\textbf{j}\textbf{i} = \textbf{k}\\&\textbf{j}\textbf{k}= -\textbf{k}\textbf{j} = \textbf{i}\\&\textbf{k}\textbf{i} = -\textbf{i}\textbf{k} = \textbf{j}~.\end{align*}
For convenience we use a vector notation for biquaternions as follows:

$$W=a+v~,\;\;\;
v=b\textbf{i}+c\textbf{j}+d\textbf{k}~.$$
The conjugate of a biquaternion $W$ is given by

$$\text{conj}(W) =a-b\textbf{i}-c\textbf{j}-d\textbf{k}~.$$

The biquaternion ring $B_{Q }$ is isomorphic to  the matrix ring $\mathbb{M}_2(\mathbb{C})$. This can be seen via the following map:
\begin{align*}
&f:B_{Q }\to\mathbb{M}_2(\mathbb{C})\\&a+b\textbf{i}+c\textbf{j}+d\textbf{k}\mapsto M=\begin{pmatrix}
         a+b\textbf{i}&c+d\textbf{i}\\-c+d\textbf{i}&a-b\textbf{i}
\end{pmatrix}{~.}
\end{align*}

The "norm" $N(W)$ of a biquaternion  $W$ is
\begin{align*}N(W)=\mathrm{conj}(W)W=\mathrm{ det}(M)=a^2+b^2+c^2+d^2~.\end{align*}
Note that the norm is homogeneous of degree 2 and may take complex values. In particular, a biquaternion $W$ is invertible if and only if $N(W)\neq 0$. In this case its inverse is given by

$$\text{inv}(W)=\frac{\mathrm{conj}(W)}{N(W)}~.$$
\noindent\textbf{Exponential and spectrum. }

Let $a+b\textbf{i}+c\textbf{j}+d\textbf{k}=a+v$ be a biquarternion such that $N(v)\neq 0$. In this case $ \widehat{v}=\frac{v}{\sqrt{N(v)}}$ verifies $ \widehat{v}^2=-1$. 

Hence write 

\begin{align}
e^{a+v}&=e^{a}e^{v}=e^{a}e^{\widehat{v}\sqrt{N(v)}}\nonumber
\\&=e^{a}\sum\limits_{k=0}^{+\infty}\frac{(\widehat{v}\sqrt{N(v)})^k}{k!}\nonumber
\\&=e^{a}\Big(\sum\limits_{k=0}^{+\infty}\frac{(-1)^k(\sqrt{N(v)})^{2k}}{(2k)!}+\sum\limits_{k=0}^{+\infty}\frac{(-1)^k(\sqrt{N(v)})^{2k+1}}{(2k+1)!}\widehat{v}\Big)\nonumber
\\&=e^{a}\Big(\cos(\sqrt{N(v)})+\frac{\sin(\sqrt{N(v)})}{\sqrt{N(v))}}v\Big){~.}\label{formula1}
\end{align}
The above computation do not depend on the choice of $\sqrt{N(v)}$
because $\cos$ is even and $\sin$ is odd.\\
Finally  the set of $\lambda\in \mathbb{C}$ such that $(a+v-\lambda)$ is non-invertible can be explicitly determined:

$(a+v-\lambda)$ is non-invertible if and only if $ 0=N(a+v-\lambda)=(a-\lambda)^2+N(v)$ if and only if $\lambda\in\left\{a\pm\sqrt{-N(v)}\right\}$.

\end{appendices}

\end{document}